\nonstopmode \numberwithin{equation}{section}
\nonstopmode \numberwithin{equation}{section}
\theoremstyle{plain}
\newtheorem{conj}{Conjecture}
\theoremstyle{definition}
\newtheorem{defn}{Definition}[section]
\newtheorem{thm}{Theorem}[section]
\newtheorem{prob}{Problem}[section]
\newtheorem{cor}{Corollary}[section]
\newtheorem{prop}{Proposition}[section]
\newtheorem{rem}{Remark}[section]
\newtheorem{lem}{Lemma}[section]
\newcounter{minutes}\setcounter{minutes}{\time}
\newcounter{hours}\setcounter{hours}{\time}
\newcounter {own}
\def\theown {\thesection       .\arabic{own}}
\newenvironment{pf}[1][]{%
 \vskip 3mm
 \noindent
 \ifthenelse{\equal{#1}{}}%
  {{\slshape Proof. }}%
  {{\slshape #1.} }%
 }%
{\qed\bigskip}
\newcounter{alphabet}
\def\be{\begin{equation}}
\def\ee{\end{equation}}
\newcommand{\bee}{\begin{enumerate}}
\newcommand{\eee}{\end{enumerate}}
\newcommand{\blem}{\begin{lem}}
\newcommand{\elem}{\end{lem}}
\newcommand{\bthm}{\begin{thm}}
\newcommand{\ethm}{\end{thm}}
\newcommand{\bcor}{\begin{cor}}
\newcommand{\ecor}{\end{cor}}
\newcommand{\beg}{\begin{examp}}
\newcommand{\eeg}{\end{examp}}
\newcommand{\begs}{\begin{examples}}
\newcommand{\eegs}{\end{examples}}
\newcommand{\bdefn}{\begin{defn}}
\newcommand{\edefn}{\end{defn}}
\newcommand{\bprob}{\begin{prob}}
\newcommand{\eprob}{\end{prob}}
\newcommand{\bei}{\begin{itemize}}
\newcommand{\eei}{\end{itemize}}
\newcommand{\bcon}{\begin{conj}}
\newcommand{\econ}{\end{conj}}
\newcommand{\bcons}{\begin{conjs}}
\newcommand{\econs}{\end{conjs}}
\newcommand{\bprop}{\begin{prop}}
\newcommand{\eprop}{\end{prop}}
\newcommand{\br}{\begin{rem}}
\newcommand{\er}{\end{rem}}
\newcommand{\brs}{\begin{rems}}
\newcommand{\ers}{\end{rems}}
\newcommand{\bo}{\begin{obser}}
\newcommand{\eo}{\end{obser}}
\newcommand{\bos}{\begin{obsers}}
\newcommand{\eos}{\end{obsers}}
\newcommand{\bpf}{\begin{pf}}
\newcommand{\epf}{\end{pf}}
\newcommand{\ba}{\begin{array}}
\newcommand{\ea}{\end{array}}
\newcommand{\beq}{\begin{eqnarray}}
\newcommand{\beqq}{\begin{eqnarray*}}
\newcommand{\eeq}{\end{eqnarray}}
\newcommand{\eeqq}{\end{eqnarray*}}
\begin{document}

\title{Generalized weighted composition-differentiation operators on weighted Bergman spaces}

\author{Molla Basir Ahamed}
\address{Molla Basir Ahamed, Department of Mathematics, Jadavpur University, Kolkata-700032, West Bengal, India.}
\email{mbahamed.math@jadavpuruniversity.in}

\author{Taimur Rahman}
\address{Taimur Rahman, Department of Mathematics, Jadavpur University, Kolkata-700032, West Bengal, India.}
\email{taimurr.math.rs@jadavpuruniversity.in}

\subjclass[{AMS} Subject Classification:]{Primary 47B33, 48B38, 30H20, Secondary 47A05, 47B15}
\keywords{Composition Operator, Differentiation Operator, Kernel, Spectrum, Complex symmetry, Hardy Space, Weighted Bergman Space}

\def\thefootnote{}
\footnotetext{ {\tiny File:~\jobname.tex,
printed: \number\year-\number\month-\number\day,
          \thehours.\ifnum\theminutes<10{0}\fi\theminutes }
} \makeatletter\def\thefootnote{\@arabic\c@footnote}\makeatother

\begin{abstract} 
 Let $ \mathcal{H}(\mathbb{D}) $ be the class of all holomorphic functions in the unit disk $ \mathbb{D} $. We aim to explore the complex symmetry exhibited by generalized weighted composition-differentiation operators, denoted as $L_{n, \psi, \phi}$ and is defined by
\begin{align*}
	L_{n, \psi, \phi}:=\sum_{k=1}^{n}c_kD_{k, \psi_k, \phi},\; \mbox{where }\;  c_k\in\mathbb{C}\; \mbox{for}\;  k=1, 2, \ldots, n,
\end{align*} where $ D_{k, \psi, \phi}f(z):=\psi(z)f^{(k)}(\phi(z)),\; f\in \mathcal{A}^2_{\alpha}(\mathbb{D}),
$ in the reproducing kernel Hilbert space, labeled as $\mathcal{A}^2_{\alpha}(\mathbb{D})$, which encompasses analytic functions defined on the unit disk $\mathbb{D}$. By deriving a condition that is both necessary and sufficient, we provide insights into the $ C_{\mu, \eta} $-symmetry exhibited by $L_{n, \psi, \phi}$. The explicit conditions for which the operator T is Hermitian and normal are obtained through our investigation. Additionally, we conduct an in-depth analysis of the spectral properties of $ L_{n, \psi, \phi} $ under the assumption of $ C_{\mu, \eta} $-symmetry and thoroughly examine the kernel of the adjoint operator of $L_{n, \psi, \phi}$.

\end{abstract}

\maketitle
\pagestyle{myheadings}
\markboth{M. B. Ahamed and T. Rahman}{Generalized weighted composition-differentiation operators on weighted Bergman spaces}
\tableofcontents
\section{Introduction}
Let $ \mathcal{S}=\mathcal{S}(\mathbb{D}) $ be the class of all holomorphic self-maps of the unit disk $ \mathbb{D} $ of the complex plane $ \mathbb{C} $. Each $ \phi\in\mathcal{S} $ induces a composition operator $ \mathcal{C}_{\phi}: \mathcal{H}(\mathbb{D}) \to \mathcal{H}(\mathbb{D}) $ defined by $ \mathcal{C}_{\phi}f:=f \circ \phi $, where $ \mathcal{H}(\mathbb{D}) $ is the class of all holomorphic functions in $ \mathbb{D} $. The theory of composition operators has received significant attention over the past four decades in different contexts. For a detailed understanding of composition operators acting on holomorphic function spaces, we recommend consulting well-known references \cite{Cowen-MacCluer-CRCP-1995} and \cite{Shapiro-SVNY-1993}. The Hardy space denoted by $ \mathcal{H}^2(\mathbb{D}) $, consisting of functions $ f\in\mathcal{H}(\mathbb{D}) $ such that 
\begin{align*}
	||f||^2_{\mathcal{H}^2(\mathbb{D})}=\sup_{0\leq r<1}\frac{1}{2\pi}\int_{0}^{2\pi}|f(re^{i\theta})|^2 d\theta<\infty.
\end{align*}
For $ \alpha>-1 $, the weighted Bergman space $ \mathcal{A}^2_{\alpha}(\mathbb{D}) $ is the set of all functions $ f\in\mathcal{H}(\mathbb{D}) $ for which 
\begin{align*}
	||f||^2_{\mathcal{A}^2_{\alpha}(\mathbb{D})}=\frac{1}{2\pi}\int_{0}^{2\pi}|f(z)|^2 d{A}_{\alpha}(z)<\infty,
\end{align*}
where $ d{A}_{\alpha}(z)=(\alpha+1)(1-|z|^2)^{\alpha}dA(z) $ and $ dA(z)=\frac{r}{\pi}drd\theta$ for $ z=re^{i\theta} $. It is commonly recognized that the weighted Bergman space $ \mathcal{A}^2_{\alpha}(\mathbb{D}) $ constitutes a Hilbert space, wherein the inner product is determined by 
\begin{align*}
	\langle f, g \rangle=\int_{\mathbb{D}}f(w)\overline{g(w)}dA_{\alpha}(w)\; \mbox{for all}\; f, g\in \mathcal{A}^2_{\alpha}(\mathbb{D}).
\end{align*}
The reproducing kernel $ K_w(z) $ of $ \mathcal{A}^2_{\alpha}(\mathbb{D}) $ for the point $ w\in\mathbb{D} $ is 
\begin{align*}
	K_w(z)=\frac{1}{(1-\overline{w}z)^{2+\alpha}}
\end{align*} and a simple computation shows that $ \langle f(z), K_w(z) \rangle=f(w) $ for all $ f\in \mathcal{A}^2_{\alpha}(\mathbb{D}) $ and $ z\in\mathbb{D} $. \vspace{1.2mm}
It is a well-established fact, as shown in \cite[Corollary 3.7]{Cowen-MacCluer-CRCP-1995}, that the composition operator remains bounded when applied to $\mathcal{H}^2(\mathbb{D})$. Specifically, the operator norm of $C_{\varphi}$ satisfies the inequality:
\begin{align*}
	\left(\frac{1}{1-|\varphi(0)|^2}\right)^{1/2} \leq ||C_{\varphi}|| \leq \left(\frac{1+|\varphi(0)|^2}{1-|\varphi(0)|^2}\right)^{1/2}.
\end{align*}
For any analytic function $\psi$, the weighted composition operator $C_{\psi, \phi}: \mathcal{H(\mathbb{D})} \to \mathcal{H(\mathbb{D})}$ is defined as $C_{\psi, \phi}(f) = \psi \cdot (f \circ \phi)$. In the realm of analytic functions, it can be readily verified that the differentiation operator $D(f) = f'$ is not bounded when applied to the Hardy space $\mathcal{H}^2(\mathbb{D})$. This is evident from the fact that the sequence $\{z^n\}_{n=1}^{\infty}$, consisting of unit vectors in $\mathcal{H}^2(\mathbb{D})$, satisfies $||D(f)|| = n$. Nevertheless, for analytic self-mappings $\phi$ defined on the unit disk $\mathbb{D}$, the operator $D_{\phi}: \mathcal{H}^2(\mathbb{D}) \to \mathcal{H}^2(\mathbb{D})$ given by $D_{\phi} = f^{\prime} \circ \phi$ remains bounded. For recent advances, in composition-differentiation operators, we refer to the articles \cite{Han-Wang-BJMA-2021,Han-Wang-CAOT-2021,Han-Wang-BJMA-2022} and references therein.\vspace{1.2mm} 

The study of composition operators over Bergman spaces is an important area of research in complex analysis and operator theory. Bergman spaces are a class of function spaces defined on the unit disk in the complex plane, and composition operators are linear operators that map functions from one space to another by composing them with a fixed function. Bergman spaces are fundamental in the study of function theory, particularly in the context of bounded analytic functions. The composition operators over Bergman spaces provide insights into the behavior and properties of these functions when they are composed with specific functions. Understanding the composition operators helps in investigating the structure and properties of the functions in the Bergman space.\vspace{1.2mm}

The study of composition operators over Bergman spaces is also significant in the realm of operator theory. Composition operators are bounded linear operators that act on function spaces. Analyzing their properties and behavior over Bergman spaces provides insights into their spectral properties, compactness, and other important operator-theoretic properties. The investigation of these properties is crucial for understanding the general theory of operators on function spaces and their applications in various areas of mathematics. The study of composition operators over Bergman spaces has practical applications in engineering and physics. These spaces find applications in signal processing, image analysis, control theory, and quantum mechanics. Understanding the behavior and properties of composition operators over Bergman spaces enables the development of efficient algorithms, signal processing techniques, and mathematical models in various engineering and physical systems.\vspace{1.2mm}

Let $ \phi $ be an analytic self-map of $ \mathbb{D} $ and $ n\in\mathbb{N} $. The composition-differentiation operator $ D_{n, \phi} $ is defined as follows: for any analytic function $ f $ of $ \mathbb{D} $, $ D_{n, \phi}f(z):=f^{(n)}(\phi(z)) $. Moreover, let $ \psi $ be an analytic function defined on $ \mathbb{D} $, the weighted composition-differentiation operator, denoted by $ D_{n, \psi, \phi} $, is defined as follows:
\begin{align*}
	 D_{n, \psi, \phi}f(z)=\psi(z)f^{(n)}(\phi(z)),\; f\in \mathcal{H}(\mathbb{D}).
\end{align*}
\begin{rem}
	The following observations are clear.
	\begin{enumerate}
		\item[(i)] If $ n=0 $, then the operator becomes the weighted composition operator $ D_{\psi, \phi} $. 
		\item[(ii)] In particular, if $ \psi(z)\equiv 1 $ and $ n=1 $, then the operator $ D_{n, \psi, \phi} $ becomes the composition operator $ C_{\phi}D $.
		\item[(iii)] For $ \psi(z)=\phi^{\prime}(z) $ and $ n=1 $, the operator in fact reduces to the product $ DC_{\phi} $, where $ C_{\phi}f(z)=D_{0, \phi}f(z)=f(\phi(z)) $.
	\end{enumerate}
\end{rem}
For more details about the operators $ C_{\phi}D $ and $ DC_{\phi} $ and their different properties, we refer to the articles \cite{Ohno-BAMS-2006,Ohno-BKMS-2009,Stevic-Sharma-Bhat-AMC-2011} and references therein. In a recent publication \cite{Fatehi-Hammond-CAOT-2021}, Fatehi and Hammond undertook an analysis of the properties of weighted composition-differentiation operators applied to the Hardy space . This particular space, denoted as $ \mathcal{H}^2(\mathbb{D}) $, comprises all analytic functions on the unit disk $ \mathbb{D} $ that possess a power series representation with complex coefficients whose squares have a finite sum.\vspace{1.2mm}

In this paper, our objective is to investigate various properties of weighted composition differentiation operators for weighted Bergman spaces, expanding the scope of study beyond existing results. Henceforward, we denote $ L_{n, \psi, \phi} $ by composition linear differentiation operator and is defined by
\begin{align*}
	L_{n, \psi, \phi}:=\sum_{k=1}^{n}c_kD_{k, \psi_k, \phi},\; \mbox{where }\;  c_k\in\mathbb{C}\; \mbox{for}\;  k=1, 2, \ldots, n.
\end{align*}

The outline of this paper is organized as follows. In Section 2, we explicitly determine the forms of $ \psi_k $ (where $ k=1, 2, \ldots, n $) and $ \phi $ such that the generalized weighted composition-differentiation operator $ L_{n, \psi, \phi} $ exhibits complex symmetry on $ \mathcal{A}^2_{\alpha}(\mathbb{D}) $ with conjugation $ C_{\mu, \eta} $. Furthermore, we establish the necessary and sufficient conditions for $ L_{n, \psi, \phi} $ to be normal and Hermitian. In Section 3, we investigate the kernel of the adjoint operator of $ L_{n, \psi, \phi} $ and offer the corresponding kernel under the assumption of complex symmetry with conjugation $ C_{\mu, \eta} $. In Section 4, we analyze the spectral characteristics of $ L_{n, \psi, \phi} $ and determine its spectrum under the conditions of complex symmetry with conjugation $ C_{\mu, \eta} $ and $ \phi(0)=0 $.
\section{Complex symmetric generalized weighted composition-differentiation operator on the Bergman space $ \mathcal{A}^2_{\alpha}(\mathbb{D}) $}

In this section, we mainly discuss complex symmetry, normality and self-adjointness of the generalized weighted composition-differentiation operator $ L_{n, \psi, \phi} $ on $ \mathcal{A}^2_{\alpha}(\mathbb{D}) $.\vspace{1.2mm}

Suppose $ n $ is a non-negative integer and $ w $ belongs to the set $ \mathbb{D} $. We define $ K^{[n]}_w(z) $ as the reproducing kernel for point-evaluation of the $ n $-th derivative, with
\begin{align*}
	  K^{[n]}_w(z)=\frac{p_nz^n }{(1-\bar{w}z)^{n+\alpha+2}}, z\in\mathbb{D},
\end{align*}
where $p_n=(\alpha+2)(\alpha+3) \ldots(\alpha+n+1)$. It can be proven that $ \langle f(z),  K^{[n]}_w(z)\rangle=f^{(n)}(w) $, for all $ f\in\mathcal{A}^2_{\alpha}(\mathbb{D}) $. For any non-negative integer $ n $, we define 
\begin{align*}
	\gamma_n=\sqrt{\frac{\Gamma(n+\alpha+2)}{n!\Gamma(2+\alpha)}}z^n, z\in\mathbb{D},
\end{align*}
where $ \Gamma $ is the standard Gamma function . Then the set $ \{\gamma_n:n\ge0\} $ forms an orthonormal basis for $ \mathcal{A}^2_{\alpha}(\mathbb{D}) $ (see\cite{Hedenmalm-Korenblum-Zhu-GTMS-2000}).
\vspace{1.2mm} 

An antilinear map $ \mathcal{C}:\mathcal{H}\to\mathcal{H} $ on a separable Hilbert space $ \mathcal{H} $ is called a conjugation if it satisfies $ \langle \mathcal{C}x,\mathcal{C}y\rangle=\langle y,x\rangle $ for all $ x,y\in\mathcal{H} $, and $ \mathcal{C}^2=I $. A bounded linear operator $T$ on $\mathcal{H}$ is said to be complex symmetric if there exists a conjugation $ \mathcal{C} $ on $ \mathcal{H} $ such that $ T=\mathcal{C}T^*\mathcal{C} $.\vspace{1.2mm} 

For $\mu, \eta $ on the unit circle ${z\in\mathbb{C}:|z|=1}$, the conjugation $ \mathcal{C}_{\mu, \eta} $ is defined as $ \mathcal{C}_{\mu, \eta}f=\mu\overline{f(\overline{\eta z})}$, where $f$ belongs to the class of analytic functions. In this paper, we explore the complex symmetric weighted composition-differentiation operator on the weighted Bergman space $ \mathcal{A}^2_{\alpha}(\mathbb{D}) $ with respect to the conjugation $ \mathcal{C}_{\mu, \eta} $. \vspace{1.2mm}

The recent work by Garcia and Hammond \cite{Garcia-Hammond-OPAP-2013} focused on the characteristics of a complex symmetric weighted composition operator on weighted Hardy spaces, where they provided explicit formulas for these operators using a particular conjugation technique. From \cite{Allu-Hal-Pal-2023}, we have a lemma that directly addresses the behavior of the adjoint of the weighted composition-differentiation operator $ D_{n, \psi, \phi} $ with respect to the reproducing kernel of $ \mathcal{A}^2_{\alpha}(\mathbb{D}) $.
\begin{lem}\cite{Allu-Hal-Pal-2023}\label{lem-2.1}
Let $ \phi $ be an analytic self-map of $ \mathbb{D} $ and $ \psi\in\mathcal{H}(\mathbb{D})$  such that $ D_{n, \psi, \phi} $ is bounded on $ \mathcal{A}^2_{\alpha}(\mathbb{D}) $. Then for any $ w\in\mathbb{D} $, 
\begin{align*}
	D^*_{n, \psi, \phi}K_{w}(z)=\overline{\psi(w)}K^{[n]}_{\phi(w)}.
\end{align*}	
\end{lem}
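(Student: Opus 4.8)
The plan is to verify the claimed identity weakly, that is, by pairing both sides against an arbitrary element of $\mathcal{A}^2_{\alpha}(\mathbb{D})$ and exploiting the two reproducing properties already recorded in this section: $\langle g, K_w \rangle = g(w)$ for the ordinary kernel and $\langle f, K^{[n]}_{w} \rangle = f^{(n)}(w)$ for the $n$-th derivative kernel. First I would fix $w \in \mathbb{D}$ and note that $K_w \in \mathcal{A}^2_{\alpha}(\mathbb{D})$, so that, the boundedness of $D_{n,\psi,\phi}$ being assumed, the vector $D^*_{n,\psi,\phi}K_w$ is a well-defined element of $\mathcal{A}^2_{\alpha}(\mathbb{D})$. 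It then suffices to show $\langle f, D^*_{n,\psi,\phi}K_w \rangle = \langle f, \overline{\psi(w)}\,K^{[n]}_{\phi(w)} \rangle$ for every $f \in \mathcal{A}^2_{\alpha}(\mathbb{D})$.

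The key step is the following chain of equalities, valid for arbitrary $f \in \mathcal{A}^2_{\alpha}(\mathbb{D})$:
\begin{align*}
	\langle f, D^*_{n,\psi,\phi}K_w \rangle
	= \langle D_{n,\psi,\phi}f, K_w \rangle
	= \big(D_{n,\psi,\phi}f\big)(w)
	= \psi(w)\, f^{(n)}(\phi(w))
	= \psi(w)\, \langle f, K^{[n]}_{\phi(w)} \rangle
	= \langle f, \overline{\psi(w)}\,K^{[n]}_{\phi(w)} \rangle,
\end{align*}
where the first equality is the definition of the Hilbert-space adjoint, the second is the reproducing property of $K_w$, the third is the definition of $D_{n,\psi,\phi}$, the fourth is the reproducing property of $K^{[n]}_{\phi(w)}$ applied at the point $\phi(w)\in\mathbb{D}$, and the last uses antilinearity of the inner product in its second slot to absorb the scalar $\psi(w)$ as $\overline{\psi(w)}$. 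Since $f$ was arbitrary, the Riesz representation theorem (equivalently, non-degeneracy of the inner product) yields $D^*_{n,\psi,\phi}K_w = \overline{\psi(w)}\,K^{[n]}_{\phi(w)}$, which is the assertion.

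I do not anticipate a genuine obstacle here: the argument is a routine duality computation, and the only place requiring care is the bookkeeping of the conjugate-linearity of $\langle\cdot,\cdot\rangle$ in the second variable. If one preferred not to invoke the derivative-reproducing identity as given, the mild additional work would be to derive it by differentiating $\langle f, K_w\rangle = f(w)$ formally $n$ times in $\bar{w}$ (under the integral sign, justified by analyticity and local uniform boundedness on $\mathbb{D}$), which produces exactly the factor $p_n = (\alpha+2)(\alpha+3)\cdots(\alpha+n+1)$ and the monomial $z^n$ appearing in $K^{[n]}_w$; this is the sole quantitative content, and it is standard for weighted Bergman spaces.
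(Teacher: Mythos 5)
Your proposal is correct and is essentially the same argument the paper uses: the lemma itself is cited from Allu--Haldar--Pal, but the paper's proof of the generalized version (Lemma \ref{lem-2.2}) is exactly this duality computation — pair against an arbitrary $f$, apply the reproducing property of $K_w$, the definition of the operator, and the derivative-reproducing property of $K^{[n]}_{\phi(w)}$, then conclude by density/non-degeneracy. Your bookkeeping of the conjugate-linearity is handled correctly, so nothing further is needed.
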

In order to make progress with our research, it is imperative to formulate a lemma that broadens the applicability of Lemma \ref{lem-2.1} in a more generalized way. As a result, we present the following lemma, which specifically addresses the action of the adjoint of the generalized composition-differentiation operator $ L_{n, \psi, \phi} $ on the reproducing kernel of $ \mathcal{A}^2_{\alpha}(\mathbb{D}) $.

\begin{lem}\label{lem-2.2}
	Let $ \phi $ be an analytic self-map of $ \mathbb{D} $ and $ \psi_k\in\mathcal{H}(\mathbb{D}) $ ($ k=1, 2, \ldots, k $) such that $ L_{n, \psi, \phi}:=\sum_{k=1}^{n}c_kD_{k, \psi, \phi} $ (where $ c_k\in\mathbb{C} $ for $ k=1, 2, \ldots, n $) is bounded on $ \mathcal{A}^2_{\alpha}(\mathbb{D}) $. Then for any $ w\in\mathbb{D} $, 
	\begin{align*}
		L^*_{n, \psi, \phi}K_{w}(z)=\sum_{k=1}^{n}\overline{c_k\psi_k(w)}K^{[k]}_{\phi(w)}.
	\end{align*}
\end{lem}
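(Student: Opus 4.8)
The plan is to reduce the statement to Lemma~\ref{lem-2.1} by exploiting the linearity of the adjoint operation together with the antilinearity it introduces on scalars. First I would recall that $L_{n, \psi, \phi}=\sum_{k=1}^{n}c_kD_{k, \psi_k, \phi}$ is, by hypothesis, a bounded linear operator on $\mathcal{A}^2_{\alpha}(\mathbb{D})$, being a finite linear combination of the bounded operators $D_{k, \psi_k, \phi}$. Consequently its Hilbert-space adjoint exists, is bounded, and satisfies $(\sum_{k=1}^{n}c_kD_{k, \psi_k, \phi})^*=\sum_{k=1}^{n}\overline{c_k}\,D^*_{k, \psi_k, \phi}$; this is the only genuinely structural fact needed, and it is standard.

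Next I would apply this identity to the reproducing kernel $K_w(z)$ for a fixed $w\in\mathbb{D}$:
\begin{align*}
	L^*_{n, \psi, \phi}K_w(z)=\sum_{k=1}^{n}\overline{c_k}\,D^*_{k, \psi_k, \phi}K_w(z).
\end{align*}
Now invoke Lemma~\ref{lem-2.1} term by term, with $n$ there replaced by $k$ and $\psi$ replaced by $\psi_k$, which gives $D^*_{k, \psi_k, \phi}K_w(z)=\overline{\psi_k(w)}\,K^{[k]}_{\phi(w)}$. Substituting and combining the conjugated scalars $\overline{c_k}\,\overline{\psi_k(w)}=\overline{c_k\psi_k(w)}$ yields
\begin{align*}
	L^*_{n, \psi, \phi}K_w(z)=\sum_{k=1}^{n}\overline{c_k\psi_k(w)}\,K^{[k]}_{\phi(w)},
\end{align*}
which is exactly the claim.

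There is no real obstacle here; the lemma is essentially a bookkeeping extension of Lemma~\ref{lem-2.1}. The only points that deserve a sentence of care are: (i) confirming that each $D_{k, \psi_k, \phi}$ is individually bounded so that each adjoint is well defined — this can be taken as part of the standing hypothesis, or one notes that boundedness of the sum together with the explicit kernel action is all that is really used; and (ii) being careful that the adjoint is \emph{conjugate}-linear in the coefficients, so that $c_k$ becomes $\overline{c_k}$, which then recombines cleanly with the $\overline{\psi_k(w)}$ coming from Lemma~\ref{lem-2.1}. Since reproducing kernels span a dense subset of $\mathcal{A}^2_{\alpha}(\mathbb{D})$, the displayed formula on kernels in fact characterizes $L^*_{n, \psi, \phi}$ completely, which is the form in which it will be used in the sequel.
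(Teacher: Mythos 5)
Your proposal is correct. It differs from the paper's argument only in packaging: the paper does not invoke the identity $\bigl(\sum_{k}c_kD_{k,\psi_k,\phi}\bigr)^*=\sum_{k}\overline{c_k}\,D^*_{k,\psi_k,\phi}$ and then cite Lemma \ref{lem-2.1} term by term; instead it runs the inner-product computation directly, writing $\langle f, L^*_{n,\psi,\phi}K_w\rangle=\langle L_{n,\psi,\phi}f, K_w\rangle=\sum_k c_k\psi_k(w)f^{(k)}(\phi(w))=\bigl\langle f, \sum_k\overline{c_k\psi_k(w)}K^{[k]}_{\phi(w)}\bigr\rangle$ for all $f$, which amounts to inlining the proof of Lemma \ref{lem-2.1} for each summand. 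The practical difference is exactly the point you flag in (i): your reduction, taken literally, needs each $D_{k,\psi_k,\phi}$ to be individually bounded so that each $D^*_{k,\psi_k,\phi}$ exists, whereas the paper's direct computation only uses boundedness of the sum $L_{n,\psi,\phi}$ together with the reproducing properties $\langle g,K_w\rangle=g(w)$ and $\langle f,K^{[k]}_w\rangle=f^{(k)}(w)$, so it works under the stated hypothesis without further comment. Your version is shorter and makes the logical dependence on Lemma \ref{lem-2.1} explicit; the paper's is self-contained and hypothesis-minimal. Either is acceptable, but if you keep your route you should either add individual boundedness to the hypotheses or fall back on the direct computation as you suggest.
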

\begin{proof}[\bf Proof of Lemma \ref{lem-2.2}]
	Let $ f\in\mathcal{A}^2_{\alpha}(\mathbb{D}) $. Then a simple computation shows that 
	\begin{align*}
		\langle f,  L^*_{n, \psi, \phi}K_{w}\rangle_{\mathcal{A}^2_{\alpha}(\mathbb{D})}&=\langle L_{n, \psi, \phi},  K_{w}\rangle_{\mathcal{A}^2_{\alpha}(\mathbb{D})}\\ &=\bigg\langle \sum_{k=1}^{n}c_kD_{k, \psi_k, \phi}, K_{w} \bigg\rangle_{\mathcal{A}^2_{\alpha}(\mathbb{D})}\\&=\sum_{k=1}^{n}\bigg\langle c_k\psi_k\left(f^{(k)}\circ\phi\right), K_w \bigg\rangle_{\mathcal{A}^2_{\alpha}(\mathbb{D})}\\&=\sum_{k=1}^{n} c_k\psi_k(w)f^{(k)}(\phi(w))\\&=\sum_{k=1}^{n}\bigg\langle f,\;  \overline{c_k\psi_k(w)}K^{[k]}_{\phi(w)} \bigg\rangle_{\mathcal{A}^2_{\alpha}(\mathbb{D})}\\&=\bigg\langle f,\;  \sum_{k=1}^{n}\overline{c_k\psi_k(w)}K^{[k]}_{\phi(w)} \bigg\rangle_{\mathcal{A}^2_{\alpha}(\mathbb{D})}.
	\end{align*} 
	Hence, we see that 
	\begin{align*}
		\langle f,  L^*_{n, \psi, \phi}K_{w}\rangle_{\mathcal{A}^2_{\alpha}(\mathbb{D})}=\bigg\langle f,\;  \sum_{k=1}^{n}\overline{c_k\psi_k(w)}K^{[k]}_{\phi(w)} \bigg\rangle_{\mathcal{A}^2_{\alpha}(\mathbb{D})}\;\mbox{for all}\; f\in\mathcal{A}^2_{\alpha}(\mathbb{D})
	\end{align*}
	which leads the following conclusion
	\begin{align*}
		L^*_{n, \psi, \phi}K_{w}(z)=\sum_{k=1}^{n}\overline{c_k\psi_k(w)}K^{[k]}_{\phi(w)}.
	\end{align*}
	This completes the proof.
\end{proof}
In this section, we aim to explore the conditions under which certain combinations of $\psi_k$ ($k=1, 2, \ldots,n) $ and $\phi$ produce $C_{\mu,\eta}$-symmetric generalized weighted composition-differentiation operators $ L_{n, \psi, \phi} $. In the following result, we obtain a necessary and sufficient condition for which the generalized weighted composition-differentiation operator $ L_{n, \psi, \phi} $ is complex symmetric on  $ \mathcal{A}^2_{\alpha}(\mathbb{D}) $ with conjugation  $ C_{\mu, \eta} $.  
\begin{thm}\label{thm-2.1}
	Let $ \phi : \mathbb{D}\to \mathbb{D} $ be an analytic self-map of $ \mathbb{D} $ and $ \psi_k\in\mathcal{H} $ with $ \psi_k\neq 0 $ ($ k=1, 2, \ldots, n$) such that $ L_{n, \psi, \phi} $ is bounded on $ \mathcal{A}^2_{\alpha}(\mathbb{D}) $. Then $ L_{n, \psi, \phi} $ is complex symmetric on $ \mathcal{A}^2_{\alpha}(\mathbb{D}) $ with conjugation $ C_{\mu, \eta} $ if, and only if, 
	\begin{align*}
		\psi_k(z)=\frac{a_kz^k}{(1-\eta bz)^{k+\alpha+2}}\; ({k=1, 2, \ldots, n})\; \mbox{and}\; \phi(z)=b+\frac{cz}{1-\eta bz}\; \mbox{for all}\;  z\in\mathbb{D},
	\end{align*}
	where $ c, a_1, \ldots,a_k\in\mathbb{C} $ and $ b\in\mathbb{D} $. 
\end{thm}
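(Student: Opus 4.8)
The plan is to characterize complex symmetry via the action on reproducing kernels. Recall that $T$ is $C_{\mu,\eta}$-symmetric precisely when $T C_{\mu,\eta} = C_{\mu,\eta} T^*$, and by antilinearity of $C_{\mu,\eta}$ together with the density of $\{K_w : w \in \mathbb{D}\}$ it suffices to test this identity on reproducing kernels. First I would compute $C_{\mu,\eta} K_w$. Since $K_w(z) = (1-\bar w z)^{-(2+\alpha)}$, a direct calculation gives $C_{\mu,\eta} K_w = \mu (1 - \eta w z)^{-(2+\alpha)} \cdot \overline{(\cdots)}$; carefully, $C_{\mu,\eta} K_w(z) = \mu\,\overline{K_w(\bar\eta z)} = \mu (1 - \eta\bar{\bar w} \bar{\bar\eta z})^{-(2+\alpha)}$, which after simplification is a constant multiple of $K_{\eta \bar w}$ — I will record the precise scalar. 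Applying Lemma \ref{lem-2.2}, $C_{\mu,\eta} L_{n,\psi,\phi}^* K_w = C_{\mu,\eta}\big(\sum_{k=1}^n \overline{c_k \psi_k(w)} K^{[k]}_{\phi(w)}\big)$, so I will need the analogous formula for $C_{\mu,\eta}$ applied to the derivative-kernels $K^{[k]}_v(z) = p_k z^k (1-\bar v z)^{-(k+\alpha+2)}$. On the other side, $L_{n,\psi,\phi} C_{\mu,\eta} K_w$ is obtained by applying the definition of $L_{n,\psi,\phi}$ to the explicit function $C_{\mu,\eta} K_w$, differentiating $k$ times and multiplying by $\psi_k$ and $c_k$.

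The core of the argument is then to equate these two expressions as functions of $z$, for every $w \in \mathbb{D}$, and read off the forms of $\psi_k$ and $\phi$. For the \emph{sufficiency} direction I would simply substitute the claimed formulas $\psi_k(z) = a_k z^k (1-\eta b z)^{-(k+\alpha+2)}$ and $\phi(z) = b + cz/(1-\eta b z)$ into both sides and verify the identity by a (somewhat lengthy but routine) computation; the key algebraic fact making this work is that $\phi$ is a disk automorphism-type Möbius map adapted to $\eta$, so that $1 - \overline{\phi(w)}\,\zeta$ factors nicely against $1 - \eta b w$, and the monomial factors $z^k$ in $\psi_k$ exactly match the $z^k$ appearing in $K^{[k]}$. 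For \emph{necessity}, I would evaluate the kernel identity at convenient points: first setting $w = 0$ to pin down $\phi(0) = b$ and the leading behavior, then comparing the orders of vanishing at $z = 0$ on both sides to force the factor $z^k$ in each $\psi_k$, and finally comparing the pole structure in $z$ (the denominators must match as rational functions) to force both the denominator $(1-\eta b z)^{k+\alpha+2}$ in $\psi_k$ and the Möbius form of $\phi$. Differentiating the kernel identity in $w$ at $w=0$, or comparing Taylor coefficients in $w$, should yield enough independent equations to solve for all parameters.

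The main obstacle I anticipate is bookkeeping: the identity to be matched involves, on one side, a $k$-fold derivative of the composite function $z \mapsto \mu\,\overline{K_w}$ evaluated through $\phi$ and multiplied by $\psi_k$, and on the other side a linear combination of the $K^{[k]}_{\phi(w)}$ transported by $C_{\mu,\eta}$; expanding the derivatives via the general Leibniz/Fa\`a di Bruno rule produces many terms, and one must argue that the rational-function structure (orders of zeros at $0$, orders of poles, and leading coefficients) forces term-by-term matching rather than accidental cancellation. The cleanest route is probably to first treat the single-term operator $D_{k,\psi_k,\phi}$ in isolation — showing the kernel identity splits across $k$ because the functions $K^{[k]}_v$ have distinct vanishing orders $z^k$ at the origin and hence are linearly independent in a graded sense — and then assemble the general case. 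A secondary technical point is justifying that testing on the (possibly non-total after applying derivatives) family $\{K_w\}$ genuinely characterizes the operator identity; this follows since $\{K_w : w\in\mathbb{D}\}$ has dense span in $\mathcal{A}^2_\alpha(\mathbb{D})$ and both $L_{n,\psi,\phi} C_{\mu,\eta}$ and $C_{\mu,\eta} L_{n,\psi,\phi}^*$ are bounded (the latter antilinear), so agreement on a dense set suffices.
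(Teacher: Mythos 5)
Your proposal follows essentially the same route as the paper: test the identity $L_{n,\psi,\phi}C_{\mu,\eta}K_w = C_{\mu,\eta}L^*_{n,\psi,\phi}K_w$ on reproducing kernels via Lemma \ref{lem-2.2}, extract $\psi_k(0)=0$ and the exact vanishing order $z^k$ by evaluating at $z=0$ and $w=0$, recover the denominator of $\psi_k$ by setting $w=0$, obtain $\phi$ by differentiating in $w$ at $w=0$, and verify sufficiency by direct substitution plus density of the kernels. The only point where you are more explicit than the paper is the justification that the summed kernel identity splits into the individual equations indexed by $k$ (the paper asserts this without comment), and your proposed linear-independence/graded-vanishing argument is a reasonable way to close that step.
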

In view of Theorem \ref{thm-2.1}, it is easy to see that, in particular, if $ c_1=c_2=\cdots=c_{n-1}=0 $ and $ c_n=1 $, then $ L_{n, \psi, \phi} $ becomes $ D_{n, \psi, \phi} $. Consequently,  the following result can be easily obtained.

\begin{cor}\cite[Therem 2.1]{Allu-Hal-Pal-2023}
Let $ \phi : \mathbb{D}\to \mathbb{D} $ be an analytic self-map of $ \mathbb{D} $ and $ \psi\in\mathcal{H} $ with $ \psi\neq 0 $ such that $ D_{n, \psi, \phi} $ is bounded on $ \mathcal{A}^2_{\alpha}(\mathbb{D}) $. Then $ D_{n, \psi, \phi} $ is complex symmetric on $ \mathcal{A}^2_{\alpha}(\mathbb{D}) $ with conjugation $ C_{\mu, \eta} $ if, and only if, 
\begin{align*}
	\psi(z)=\frac{az^n}{(1-\eta bz)^{n+\alpha+2}}\; \mbox{and}\; \phi(z)=b+\frac{cz}{1-\eta bz}\; \mbox{for all}\;  z\in\mathbb{D},
\end{align*}
where $ a, c\in\mathbb{C} $ and $ b\in\mathbb{D} $. 	
\end{cor}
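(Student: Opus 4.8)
The plan is to obtain this corollary as the single-term specialization of Theorem \ref{thm-2.1}, which is exactly how the remark preceding the statement frames it. First I would record that $D_{n, \psi, \phi}$ is recovered from the generalized operator $L_{n, \psi, \phi}=\sum_{k=1}^{n}c_k D_{k, \psi_k, \phi}$ by taking $c_n=1$, $c_k=0$ for $k=1,\ldots,n-1$, and $\psi_n=\psi$. With this choice every summand with $k<n$ vanishes identically, so $L_{n, \psi, \phi}=D_{n, \psi, \phi}$ as operators on $\mathcal{A}^2_{\alpha}(\mathbb{D})$, and the boundedness hypothesis on $D_{n, \psi, \phi}$ is precisely the boundedness hypothesis needed to invoke Theorem \ref{thm-2.1}.

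Next I would apply Theorem \ref{thm-2.1} to this operator. Its conclusion produces a common $b\in\mathbb{D}$ and $c\in\mathbb{C}$ with $\phi(z)=b+cz/(1-\eta b z)$, together with the forms $\psi_k(z)=a_k z^k/(1-\eta b z)^{k+\alpha+2}$. Since the operator depends only on the $k=n$ summand, the only constraint that survives is the one on $\psi_n=\psi$, namely $\psi(z)=a_n z^n/(1-\eta b z)^{n+\alpha+2}$; relabelling $a:=a_n$ gives the asserted form. The hypothesis $\psi\neq 0$ corresponds to $\psi_n\neq 0$, and matches the single nonvanishing assumption in the corollary. The one point I would be careful about is that the coefficients $\psi_k$ with $k<n$ play no role once $c_k=0$; they impose no condition on $\phi$ and may be chosen freely, so no spurious constraints are inherited from them. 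The converse direction is equally immediate: substituting the stated $\psi$ and $\phi$ realizes the hypotheses of Theorem \ref{thm-2.1} in the $c_n=1$ case, hence $D_{n,\psi,\phi}$ is $C_{\mu, \eta}$-symmetric.

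As an independent check, and in case one prefers a self-contained derivation, I would also be prepared to prove the corollary directly from Lemma \ref{lem-2.1}. Rewriting complex symmetry as $D_{n, \psi, \phi}\,C_{\mu, \eta}=C_{\mu, \eta}\,D^*_{n, \psi, \phi}$ and testing both sides on the reproducing kernels $K_w$, one side is handled by Lemma \ref{lem-2.1} (which gives $D^*_{n, \psi, \phi}K_w=\overline{\psi(w)}K^{[n]}_{\phi(w)}$) and the other by differentiating $C_{\mu, \eta}K_w$ exactly $n$ times; the constant $p_n=(\alpha+2)\cdots(\alpha+n+1)$ then appears identically on both sides. This reduces the whole problem to the single functional equation
\[
\frac{\psi(z)\,w^{n}}{(1-\eta w\,\phi(z))^{n+\alpha+2}}=\frac{\psi(w)\,z^{n}}{(1-\eta\,\phi(w)\,z)^{n+\alpha+2}}
\]
valid for all $z,w\in\mathbb{D}$. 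I expect the main obstacle to lie entirely in solving this equation: fixing a nonzero $w$ with $\psi(w)\neq 0$ forces $\psi$ to vanish to order exactly $n$ at the origin, so $G(z):=\psi(z)/z^{n}$ is analytic with $G(0)\neq 0$; setting $w=0$ then yields $\psi(z)=a z^{n}/(1-\eta b z)^{n+\alpha+2}$ with $b=\phi(0)$, and substituting this back and extracting $(n+\alpha+2)$-th roots reduces the remaining relation to a rational identity that forces $\phi(z)=b+cz/(1-\eta b z)$ with $c=\phi'(0)$. The specialization route is cleaner, but this direct computation confirms that the corollary is genuinely the $n$-th-order, single-weight instance of Theorem \ref{thm-2.1}.
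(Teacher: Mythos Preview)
Your proposal is correct and matches the paper's approach exactly: the paper derives this corollary by specializing Theorem~\ref{thm-2.1} with $c_1=\cdots=c_{n-1}=0$, $c_n=1$, so that $L_{n,\psi,\phi}=D_{n,\psi,\phi}$, and does not provide a separate proof. Your care about the irrelevant $\psi_k$ for $k<n$ and your alternative direct argument via Lemma~\ref{lem-2.1} are both sound, the latter essentially reproducing the proof of Theorem~\ref{thm-2.1} in the single-term case.
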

The following result can be readily derived based on the implications of Theorem \ref{thm-2.1}.
\begin{cor}\label{cor-2.2}
	Let $ \phi : \mathbb{D}\to \mathbb{D} $ be an analytic self-map of $ \mathbb{D} $ and $ \psi\in\mathcal{H} $ with $ \psi\neq 0 $ such that $ D_{\psi, \phi} $ is bounded on $ \mathcal{A}^2_{\alpha}(\mathbb{D}) $. Then $ D_{\psi, \phi} $ is complex symmetric on $ \mathcal{A}^2_{\alpha}(\mathbb{D}) $ with conjugation $ C_{\mu, \eta} $ if, and only if, 
	\begin{align*}
		\psi(z)=\frac{az}{(1-\eta bz)^{\alpha+3}}\; \mbox{and}\; \phi(z)=b+\frac{cz}{1-\eta bz}\; \mbox{for all}\;  z\in\mathbb{D},
	\end{align*}
	where $ a, c\in\mathbb{C} $ and $ b\in\mathbb{D} $.
\end{cor}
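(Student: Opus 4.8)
The plan is to obtain Corollary~\ref{cor-2.2} as the one-summand specialization of Theorem~\ref{thm-2.1}. First I would observe that $D_{\psi,\phi}$ is nothing but $L_{1,\psi,\phi}$: writing $L_{1,\psi,\phi}=c_1D_{1,\psi_1,\phi}$ and taking $c_1=1$, $\psi_1=\psi$ (or, more generally, absorbing the nonzero scalar $c_1$ into the multiplier so that $\psi=c_1\psi_1$), the single-summand operator $L_{1,\psi,\phi}$ coincides with the weighted composition-differentiation operator $f\mapsto\psi\cdot(f'\circ\phi)$. Since $D_{\psi,\phi}$ and $L_{1,\psi,\phi}$ differ only by a nonzero constant, one is bounded on $\mathcal{A}^2_\alpha(\mathbb{D})$ precisely when the other is, so the standing hypotheses of the two statements match; likewise the hypothesis $\psi\neq0$ is the same as $\psi_1\neq0$.

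Next I would record the elementary fact that $\mathcal{C}_{\mu,\eta}$-symmetry is invariant under multiplication by a nonzero scalar. If $T=\mathcal{C}_{\mu,\eta}T^*\mathcal{C}_{\mu,\eta}$ and $\lambda\neq0$, then, using that $\mathcal{C}_{\mu,\eta}$ is antilinear and $(\lambda T)^*=\overline{\lambda}\,T^*$, one computes $\mathcal{C}_{\mu,\eta}(\lambda T)^*\mathcal{C}_{\mu,\eta}=\lambda\,\mathcal{C}_{\mu,\eta}T^*\mathcal{C}_{\mu,\eta}=\lambda T$, and the converse is the same computation read backwards. Hence $D_{\psi,\phi}$ is $\mathcal{C}_{\mu,\eta}$-symmetric on $\mathcal{A}^2_\alpha(\mathbb{D})$ if and only if $L_{1,\psi,\phi}$ is.

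It then remains only to read off Theorem~\ref{thm-2.1} at $n=1$. The formula for $\psi_k$ with $k=1$ becomes $\psi_1(z)=a_1z/(1-\eta bz)^{1+\alpha+2}=a_1z/(1-\eta bz)^{\alpha+3}$, while $\phi(z)=b+cz/(1-\eta bz)$, with $a_1,c\in\mathbb{C}$ and $b\in\mathbb{D}$; setting $a:=c_1a_1$ so that $\psi=c_1\psi_1=az/(1-\eta bz)^{\alpha+3}$ yields exactly the asserted normal form, and the converse implication transfers from the ``if'' direction of Theorem~\ref{thm-2.1} via the same scalar-invariance. Equivalently, one may simply note that Corollary~\ref{cor-2.2} is the $n=1$ instance of the corollary of \cite{Allu-Hal-Pal-2023} stated just above, since $az^n/(1-\eta bz)^{n+\alpha+2}$ reduces to $az/(1-\eta bz)^{\alpha+3}$ when $n=1$.

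I do not expect any genuine difficulty here: the statement is a routine specialization, and the only points that deserve an explicit line are the identification of $D_{\psi,\phi}$ with the single-summand operator $L_{1,\psi,\phi}$ (absorbing $c_1$) and the scalar-invariance of boundedness and $\mathcal{C}_{\mu,\eta}$-symmetry, after which Theorem~\ref{thm-2.1} applies verbatim.
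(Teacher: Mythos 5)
Your proposal is correct and matches the paper's intent exactly: the paper offers no separate proof of Corollary~\ref{cor-2.2}, merely noting that it ``can be readily derived'' from Theorem~\ref{thm-2.1}, and your argument is precisely that $n=1$ specialization, with the identification $D_{\psi,\phi}=L_{1,\psi,\phi}$ and the (correct) observation that boundedness and $\mathcal{C}_{\mu,\eta}$-symmetry are preserved under multiplication by a nonzero scalar. The only detail worth flagging is that the paper's notation for $D_{\psi,\phi}$ is inconsistent (Remark~(i) uses it for the $n=0$ weighted composition operator), but the exponent $\alpha+3$ in the stated normal form confirms that the order-one operator $f\mapsto\psi\cdot(f'\circ\phi)$ is meant, exactly as you read it.
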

 
We see that Corollary \ref{cor-2.2} can be considered an analogue of \cite[Theorem 2.1]{Liu-Ponnusamy-Xie-LMA-2023}, as both offer similar characterizations.

\begin{proof}[\bf Proof of Theorem \ref{thm-2.1}]
Suppose that $ L_{n, \psi, \phi} $ be complex symmetric with conjugation $ C_{\mu, \eta} $. Then, we have
	\begin{align}\label{Eq-2.1}
		L_{n, \psi, \phi}C_{\mu, \eta}K_w(z)=C_{\mu, \eta}L^*_{n, \psi, \phi}K_w(z)\;\mbox{for all} z,w\in\mathbb{D}.
	\end{align}
	Moreover, an easy computation shows that
	\begin{align}\label{Eq-2.2}
		L_{n, \psi, \phi}C_{\mu, \eta}K_w(z)&=L_{n, \psi, \phi}C_{\mu, \eta}\bigg(\frac{1}{(1-\overline{w}z)^{\alpha+2}}\bigg)\\&=L_{n, \psi, \phi}\bigg(\frac{\mu}{(1-\eta wz)^{\alpha+2}}\bigg)\nonumber\\&=\sum_{k=1}^{n}c_kD_{k, \psi_k, \phi}\bigg(\frac{\mu}{(1-\eta wz)^{\alpha+2}}\bigg)\nonumber\\&=\sum_{k=1}^{n}\frac{c_k p_k\mu\psi_k(z) (\eta w)^{k}}{(1-\eta w\phi(z))^{k+\alpha+2}}\nonumber.
	\end{align}
	and 
	\begin{align}\label{Eq-2.3}
		C_{\mu, \eta}L^*_{n, \psi, \phi}K_w(z)&=C_{\mu, \eta}\sum_{k=1}^{n }\overline{c_k\psi_k(w)}K^{[k]}_{\phi(w)}(z)\\&=C_{\mu, \eta}\sum_{k=1}^{n}\overline{c_k\psi_k(w)}\frac{p_kz^k}{\bigg(1-\overline{\phi(w)}z\bigg)^{k+\alpha+2}}\nonumber\\&=\sum_{k=1}^{n}\frac{c_kp_k\mu\psi_k(w)(\eta z)^k}{(1-\phi(w)\eta z)^{k+\alpha+2}}\nonumber.
	\end{align}
	By using \eqref{Eq-2.2} and \eqref{Eq-2.3} in \eqref{Eq-2.1}, we have
	
	\begin{align*}
		\sum_{k=1}^{n}\frac{c_k p_k\mu\psi_k(z) (\eta w)^{k}}{(1-\eta w\phi(z))^{k+\alpha+2}}&=\sum_{k=1}^{n}\frac{c_kp_k\mu\psi_k(w)(\eta z)^k}{(1-\phi(w)\eta z)^{k+\alpha+2}}\; \mbox{for all}\; z,w\in\mathbb{D}
	\end{align*}
As $ \mu ,\eta\in\{z\in\mathbb{C}:|z|=1\} $, the above equation implies that 
\begin{align}\label{Eq-2.4}
	\frac{\psi_k(z)w^{k}}{(1-\eta w\phi(z))^{k+\alpha+2}}=\frac{\psi_k(w)z^k}{(1-\phi(w)\eta z)^{k+\alpha+2}}
\end{align}
for $ k=1, 2, \ldots,n $ and for all $ z,w\in\mathbb{D} $.\vspace{1.2mm}

	Letting $ z=0 $ in \eqref{Eq-2.4}, we obtain $ \psi_k(0) =0$ for $ k=1, 2, \ldots, n. $ Let $ \psi_k(z)=z^mg_k(z) $, where $ m\in\mathbb{N} $ and $ g_k $ is analytic on $ \mathbb{D} $ with $ g_k(0)\neq0 $. Now, our claim is $ m=k $.\vspace{1.2mm}
	
	\noindent {\bf Case I}: If $ m > k $, from \eqref{Eq-2.4} it follows that 
	\begin{align*}
		\frac{z^{m-k}g_k(z)}{(1-\eta w\phi(z))^{k+\alpha+2}}=\frac{w^{m-k}g_k(w)}{(1-\phi(w)\eta z)^{k+\alpha+2}}	
	\end{align*}
	for any $w,z\in\mathbb{D} $. Putting $ w=0 $ in the above equation, we obtain $ g_k(z)=0 $ on $\mathbb{D}$, which is a contradiction of the fact $ g_k(0)\neq0 $.\vspace{1.2mm}
	
	\noindent {\bf Case II}: If $ m<k $, from \eqref{Eq-2.4} we obtain
	\begin{align*}
		\frac{w^{k-m}g_k(z)}{(1-\eta w\phi(z))^{k+\alpha+2}}=\frac{z^{k-m}g_k(w)}{(1-\phi(w)\eta z)^{k+\alpha+2}}	
	\end{align*}
for any $ w, z\in\mathbb{D} $. Setting $ w=0 $ in preceding equation gives $ g_k(0)=0 $, which contradicts the assumption  that $ g_k(0)\neq0 $.\vspace{1.2mm}

Therefore, we must have $ m=k $ and hence \eqref{Eq-2.4} is equivalent to 
	\begin{align}\label{Eq-2.5}
		\frac{g_k(z)}{(1-\eta w\phi(z))^{k+\alpha+2}}=\frac{g_k(w)}{(1-\phi(w)\eta z)^{k+\alpha+2}}	
	\end{align}
	for all $ z,w \in\mathbb{D} $.\vspace{1.2mm} 
	
	By letting $ w=0 $ in \eqref{Eq-2.5}, we obtain
	\begin{align*}
		g_k(z)=\frac{g_k(0)}{(1-\phi(0)\eta z)^{k+\alpha+2}}\; \mbox{for}\; k=1,2, \ldots, n.	
	\end{align*}
	Thus, we have 
	\begin{align*}
		\psi_k(z)=\frac{a_kz^k}{(1-\eta bz)^{k+\alpha+2}}\; \mbox{for}\; k=1,2 \ldots,n
	\end{align*}
where $ a_k=g_k(0) (k=1, 2, \ldots, n) $ and $ b=\phi(0) $.\vspace{1.2mm} 

Substituting $\psi_k(z) $ into \eqref{Eq-2.4}, we obtain 
	\begin{align}\label{Eq-2.6}
		(1-\eta bz)^{k+\alpha+2}(1-\eta w\phi(z))^{k+\alpha+2}=(1-\eta bw)^{k+\alpha+2}(1-\eta\phi(w)z)^{k+\alpha+2}
	\end{align}
	for all $ z,w\in\mathbb{D} $. Differentiating both sides of \eqref{Eq-2.6} with respect to $ w $, we obtain 
	\begin{align*}
		(k+\alpha+2)(1-&\eta bz)^{k+\alpha+2}(1-\eta w\phi(z))^{k+\alpha+1}(-\eta \phi(z))\\&=(k+\alpha+2)(1-\eta bw)^{k+\alpha+1}(1-\eta\phi(w)z)^{k+\alpha+2}(-\eta b)\\&\quad+(k+\alpha+2)(1-\eta bw)^{k+\alpha+2}(1-\eta\phi(w)z)^{k+\alpha+1}(-\eta z\phi'(w)).	
	\end{align*} 
	Letting $ w=0 $ in the above equation, we obtain 
	\begin{align*}
		\phi(z)=b+\frac{cz}{1-\eta bz}
	\end{align*}
	for all $ z\in\mathbb{D} $, where $ c=\phi'(0) $.\vspace{1.2mm}
	
	Conversely, let $ a_1, a_2, \ldots,a_k,c\in\mathbb{C} $ and $ b\in\mathbb{D} $ be such that 
	\begin{align*}
		\psi_k(z)=\frac{a_kz^k}{(1-\eta bz)^{k+\alpha+2}}\; \mbox{for}\; k=1, 2, \ldots,n\; \mbox{and}\; \phi(z)=b+\frac{cz}{1-\eta bz}\; \mbox{for all}\;  z\in\mathbb{D}.
	\end{align*}
In view of \eqref{Eq-2.2} and \eqref{Eq-2.3}, we have
	\begin{align}\label{Eq-2.7}
		L_{n, \psi, \phi}C_{\mu, \eta}K_w(z)&=\sum_{k=1}^{n}\frac{c_k p_k\mu\psi_k(z) (\eta w)^{k}}{(1-\eta w\phi(z))^{k+\alpha+2}}\\&=\sum_{k=1}^{n}\frac{c_k p_ka_k\mu (\eta wz)^{k}}{(1-\eta bz)^{k+\alpha+2}\bigg(1-\eta w(b+\frac{cz}{1-\eta bz})\bigg)^{k+\alpha+2}}\nonumber\\&=\sum_{k=1}^{n}\frac{c_k p_ka_k\mu (\eta wz)^{k}}{(1-\eta bz-\eta bw+\eta^2 b^2wz-\eta cwz)^{k+\alpha+2}}\nonumber.	
	\end{align}
and 
	\begin{align}\label{Eq-2.8}
		C_{\mu, \eta}L^*_{n, \psi, \phi}K_w(z)&=	\sum_{k=1}^{n}\frac{c_kp_k\mu\psi_k(w)(\eta z)^k}{(1-\phi(w)\eta z)^{k+\alpha+2}}\\&=\sum_{k=1}^{n}\frac{c_kp_ka_k\mu(\eta wz)^k}{(1-\eta bw)^{k+\alpha+2}(1-\eta z(b+\frac{cw}{1-\eta bw}))^{k+\alpha+2}}\nonumber\\&=\sum_{k=1}^{n}\frac{c_kp_ka_k\mu(\eta wz)^k}{(1-\eta bz-\eta bw+\eta^2 b^2wz-\eta cwz)^{k+\alpha+2}}\nonumber.
	\end{align}
	It follows from \eqref{Eq-2.7} and \eqref{Eq-2.8} that 
	\begin{align*}
		L_{n, \psi, \phi}C_{\mu, \eta}K_w(z)=C_{\mu, \eta}L^*_{n, \psi, \phi}K_w(z)\; \mbox{for all}\; z\in\mathbb{D}.
	\end{align*}
	Since the span of the reproducing kernel functions is dense in $ \mathcal{A}^2_{\alpha}(\mathbb{D}) $, the operator $ L_{n, \psi, \phi} $  is complex symmetric with conjugation $ C_{\mu, \eta} $. This completes the proof.
\end{proof}
Next, we proceed to establish a sufficient condition that guarantees the normality of the bounded operator $ L_{n, \psi, \phi} $, given the presence of complex symmetry through conjugation $ C_{\mu, \eta} $. This result expands upon and strengthens the findings of \cite[Theorem 2.3]{Allu-Hal-Pal-2023}.
\begin{thm}\label{thm-2.2}
	Let $ \phi $ be an analytic self-map of $ \mathbb{D} $ with $ \phi(0)=0 $ and $ \psi_k\in\mathcal{H}(\mathbb{D})$, where $ k=1, 2, \ldots, n $,  be not identically zero such that for any $ n\in\mathbb{N} $, the operator $ L_{n, \psi, \phi} $ is bounded and complex symmetric with conjugation $ C_{\mu, \eta} $. Then  $ L_{n, \psi, \phi} $ is normal. 
\end{thm}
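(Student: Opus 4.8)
The plan is to reduce to the explicit description of the symbols furnished by Theorem~\ref{thm-2.1} and then observe that, once $\phi(0)=0$ is imposed, $L_{n,\psi,\phi}$ is diagonalized by the standard orthonormal basis $\{\gamma_m\}_{m\ge 0}$ of $\mathcal{A}^2_{\alpha}(\mathbb{D})$; a bounded operator that is diagonal with respect to an orthonormal basis is automatically normal, which finishes the proof. I will carry this out in three short steps.

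\textbf{Step 1 (reduction of the symbols).} Since $L_{n,\psi,\phi}$ is bounded and complex symmetric with conjugation $C_{\mu,\eta}$, Theorem~\ref{thm-2.1} applies and gives, with $b=\phi(0)$,
\[
\psi_k(z)=\frac{a_kz^k}{(1-\eta bz)^{k+\alpha+2}}\ \ (k=1,\dots,n),\qquad \phi(z)=b+\frac{cz}{1-\eta bz},
\]
for suitable $a_1,\dots,a_n,c\in\mathbb{C}$ and $b\in\mathbb{D}$. The normalization $\phi(0)=0$ forces $b=0$, so that $\psi_k(z)=a_kz^k$ and $\phi(z)=cz$, and hence
\[
L_{n,\psi,\phi}f(z)=\sum_{k=1}^{n}c_k\,a_k\,z^k\,f^{(k)}(cz),\qquad f\in\mathcal{A}^2_{\alpha}(\mathbb{D}).
\]

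\textbf{Step 2 (diagonalization on the monomial basis).} Evaluate $L_{n,\psi,\phi}$ on $f(z)=z^m$. For $m\ge k$ one has $f^{(k)}(w)=\tfrac{m!}{(m-k)!}w^{m-k}$, while $f^{(k)}\equiv 0$ for $m<k$; thus $z^k f^{(k)}(cz)=\tfrac{m!}{(m-k)!}c^{m-k}z^m$ for $m\ge k$, and therefore
\[
L_{n,\psi,\phi}z^m=\beta_m z^m,\qquad \beta_m:=\sum_{k=1}^{\min\{n,m\}}c_k\,a_k\,\frac{m!}{(m-k)!}\,c^{m-k}.
\]
As $\gamma_m$ is a nonzero scalar multiple of $z^m$, this says $L_{n,\psi,\phi}\gamma_m=\beta_m\gamma_m$ for every $m\ge 0$; i.e.\ $L_{n,\psi,\phi}$ acts diagonally with respect to the orthonormal basis $\{\gamma_m\}$. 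Boundedness of $L_{n,\psi,\phi}$ guarantees $\sup_m|\beta_m|<\infty$, so $L^*_{n,\psi,\phi}$ is the bounded operator with $L^*_{n,\psi,\phi}\gamma_m=\overline{\beta_m}\gamma_m$ (computed from $\langle L^*_{n,\psi,\phi}\gamma_m,\gamma_j\rangle=\langle\gamma_m,L_{n,\psi,\phi}\gamma_j\rangle$).

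\textbf{Step 3 (normality).} For every $m$,
\[
L_{n,\psi,\phi}L^*_{n,\psi,\phi}\gamma_m=|\beta_m|^2\gamma_m=L^*_{n,\psi,\phi}L_{n,\psi,\phi}\gamma_m,
\]
and since $\{\gamma_m\}$ is an orthonormal basis of $\mathcal{A}^2_{\alpha}(\mathbb{D})$ this yields $L_{n,\psi,\phi}L^*_{n,\psi,\phi}=L^*_{n,\psi,\phi}L_{n,\psi,\phi}$, i.e.\ $L_{n,\psi,\phi}$ is normal. There is no serious obstacle in this argument; the only point requiring a little care is that the reduction in Step~1 to $\psi_k(z)=a_kz^k$ and $\phi(z)=cz$ genuinely uses \emph{both} the complex-symmetry hypothesis (through Theorem~\ref{thm-2.1}) and the normalization $\phi(0)=0$ — without $\phi(0)=0$ the operator need not be diagonal and the statement fails. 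The computation of $\beta_m$ in Step~2 is routine bookkeeping, and one may double-check it, if desired, by directly expanding $\langle L_{n,\psi,\phi}\gamma_m,\gamma_j\rangle$ and seeing it vanishes for $j\neq m$.
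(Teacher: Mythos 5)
Your proposal is correct and follows essentially the same route as the paper: both reduce via Theorem~\ref{thm-2.1} and $\phi(0)=0$ to $\psi_k(z)=a_kz^k$, $\phi(z)=cz$, and then exploit the action of $L_{n,\psi,\phi}$ on the monomial orthonormal basis $\{\gamma_m\}$. If anything, your version is slightly tighter: the paper passes from $\lVert L_{n,\psi,\phi}\gamma_m\rVert=\lVert L^*_{n,\psi,\phi}\gamma_m\rVert$ on basis vectors to the same equality for all $f$, a step that is only justified because the operator is diagonal, whereas you make the diagonalization $L_{n,\psi,\phi}\gamma_m=\beta_m\gamma_m$ explicit and conclude normality directly from $L L^*\gamma_m=\lvert\beta_m\rvert^2\gamma_m=L^*L\gamma_m$.
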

The consequences of Theorem \ref{thm-2.2} indicate that if $ c_1=c_2=\ldots=c_{n-1}=0 $ and $ c_n=1 $, then it is evident that $ L_{n, \psi, \phi}=D_{n, \psi, \phi} $. Consequently, the following result can be obtained easily from Theorem \ref{thm-2.2}.
\begin{cor}\cite[Theorem 2.3]{Allu-Hal-Pal-2023}
	Let $ \phi $ be an analytic self-map of $ \mathbb{D} $ with $ \phi(0)=0 $ and $ \psi\in\mathcal{H}(\mathbb{D}) $ be not identically zero such that for any $ n\in\mathbb{N} $, the operator $ D_{n, \psi, \phi} $ is bounded and complex symmetric with conjugation $ C_{\mu, \eta} $. Then  $ D_{n, \psi, \phi} $ is normal.
\end{cor}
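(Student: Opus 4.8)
The plan is to first pin down the precise form of $L_{n,\psi,\phi}$ by invoking Theorem \ref{thm-2.1}, then to observe that the extra hypothesis $\phi(0)=0$ makes the operator \emph{diagonal} with respect to the canonical orthonormal basis of $\mathcal{A}^2_{\alpha}(\mathbb{D})$, whence normality is immediate. Concretely, since $L_{n,\psi,\phi}$ is bounded and complex symmetric with conjugation $C_{\mu,\eta}$, Theorem \ref{thm-2.1} provides constants $a_1,\dots,a_n,c\in\mathbb{C}$ and $b\in\mathbb{D}$ with $\psi_k(z)=a_kz^k/(1-\eta bz)^{k+\alpha+2}$ for $k=1,\dots,n$ and $\phi(z)=b+cz/(1-\eta bz)$. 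The assumption $\phi(0)=0$ forces $b=\phi(0)=0$, and the formulas collapse to $\psi_k(z)=a_kz^k$ and $\phi(z)=cz$, where $c=\phi'(0)$ (automatically $|c|\le1$ by the Schwarz lemma, though this is not needed).

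Next I would compute the action on monomials. For $f(z)=z^m$ one has $f^{(k)}(\phi(z))=f^{(k)}(cz)=\frac{m!}{(m-k)!}\,c^{\,m-k}z^{\,m-k}$ when $m\ge k$ and $0$ otherwise; multiplying by $\psi_k(z)=a_kz^k$ gives $D_{k,\psi_k,\phi}(z^m)=a_k\frac{m!}{(m-k)!}c^{\,m-k}z^m$ for $m\ge k$ and $0$ for $m<k$. Summing over $k$, $L_{n,\psi,\phi}(z^m)=\lambda_m z^m$ with $\lambda_m=\sum_{k=1}^{\min\{m,n\}}c_k a_k\frac{m!}{(m-k)!}c^{\,m-k}$. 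Since each normalized basis vector $\gamma_m$ is a scalar multiple of $z^m$, this reads $L_{n,\psi,\phi}\gamma_m=\lambda_m\gamma_m$ for every $m\ge0$; that is, $L_{n,\psi,\phi}$ is diagonal relative to the orthonormal basis $\{\gamma_m:m\ge0\}$, and boundedness gives $\sup_m|\lambda_m|<\infty$.

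Finally I would invoke the standard fact that a bounded operator diagonal with respect to an orthonormal basis is normal: from $L_{n,\psi,\phi}\gamma_m=\lambda_m\gamma_m$ one gets $L^*_{n,\psi,\phi}\gamma_m=\overline{\lambda_m}\gamma_m$, so $L^*_{n,\psi,\phi}L_{n,\psi,\phi}\gamma_m=|\lambda_m|^2\gamma_m=L_{n,\psi,\phi}L^*_{n,\psi,\phi}\gamma_m$ for all $m$, and the identity extends to all of $\mathcal{A}^2_{\alpha}(\mathbb{D})$ by density of $\operatorname{span}\{\gamma_m\}$. I expect no deep obstacle here; the only care needed is the bookkeeping in extracting $\psi_k$ and $\phi$ from Theorem \ref{thm-2.1} with the specific normalization of $C_{\mu,\eta}$, the routine derivative-and-composition computation, and the justification that the adjoint of a bounded diagonal operator is again diagonal (which uses boundedness of $L_{n,\psi,\phi}$). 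The conceptual point is simply that $\phi(0)=0$ is exactly what forces $\phi(z)=cz$ and $\psi_k(z)=a_kz^k$, making every $D_{k,\psi_k,\phi}$, and hence $L_{n,\psi,\phi}$, simultaneously diagonalizable in the monomial basis.
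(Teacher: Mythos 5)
Your proposal is correct and follows essentially the same route as the paper: both reduce via Theorem \ref{thm-2.1} (using $\phi(0)=0$ to force $b=0$) to $\psi_k(z)=a_kz^k$ and $\phi(z)=cz$, and then work with the action of the operator on the monomial orthonormal basis $\{\gamma_m\}$. If anything, your explicit observation that $L_{n,\psi,\phi}\gamma_m=\lambda_m\gamma_m$ (diagonality) is a cleaner way to finish than the paper's passage from $\lVert L_{n,\psi,\phi}\gamma_m\rVert=\lVert L^*_{n,\psi,\phi}\gamma_m\rVert$ on basis vectors to $\lVert L_{n,\psi,\phi}f\rVert=\lVert L^*_{n,\psi,\phi}f\rVert$ for all $f$, a step that in general is justified precisely by the diagonality you make explicit.
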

It is easy to see that the Theorem \ref{thm-2.2} leads us to a straightforward derivation of the subsequent result.
\begin{cor}\label{cor-2.4}
Let $ \phi $ be an analytic self-map of $ \mathbb{D} $ with $ \phi(0)=0 $ and $ \psi\in\mathcal{H}(\mathbb{D}) $ be not identically zero such that the operator $ D_{\psi, \phi} $ is bounded on $ \mathcal{A}^2_{\alpha}(\mathbb{D}) $ and complex symmetric with conjugation $ C_{\mu, \eta} $. Then  $ D_{\psi, \phi} $ is normal.	
\end{cor}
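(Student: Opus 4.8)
The plan is to use Theorem \ref{thm-2.1} to pin down the symbols $\psi_1,\dots,\psi_n$ and $\phi$, and then to observe that the extra hypothesis $\phi(0)=0$ forces $L_{n,\psi,\phi}$ to be diagonal with respect to the orthonormal basis $\{\gamma_m\}_{m\ge 0}$ of $\mathcal{A}^2_\alpha(\mathbb{D})$, from which normality is immediate.

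First I would apply Theorem \ref{thm-2.1}: since $L_{n,\psi,\phi}$ is bounded and $C_{\mu,\eta}$-symmetric, there are constants $c,a_1,\dots,a_n\in\mathbb{C}$ and $b\in\mathbb{D}$ with
\[
\psi_k(z)=\frac{a_k z^k}{(1-\eta b z)^{k+\alpha+2}}\quad(k=1,\dots,n),\qquad \phi(z)=b+\frac{cz}{1-\eta b z}.
\]
The condition $\phi(0)=0$ gives $b=\phi(0)=0$, so the denominators collapse: $\phi(z)=cz$ and $\psi_k(z)=a_k z^k$ for each $k$ (in particular the conjugation parameter $\eta$ drops out of the problem entirely). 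Hence, for all $f\in\mathcal{A}^2_\alpha(\mathbb{D})$,
\[
L_{n,\psi,\phi}f(z)=\sum_{k=1}^{n}c_k a_k\, z^k f^{(k)}(cz).
\]

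Next I would compute the action on monomials. Using $\tfrac{d^k}{dz^k}z^m=\tfrac{m!}{(m-k)!}z^{m-k}$ for $m\ge k$ and $0$ otherwise, one finds $D_{k,\psi_k,\phi}(z^m)=a_k\tfrac{m!}{(m-k)!}c^{m-k}z^m$ for $m\ge k$ and $0$ for $m<k$, so that
\[
L_{n,\psi,\phi}(z^m)=\lambda_m z^m,\qquad \lambda_m:=\sum_{k=1}^{\min\{n,m\}}c_k a_k\frac{m!}{(m-k)!}c^{m-k}.
\]
Since each $\gamma_m$ is a scalar multiple of $z^m$, this says $L_{n,\psi,\phi}\gamma_m=\lambda_m\gamma_m$; that is, $\{\gamma_m\}_{m\ge 0}$ is an orthonormal basis consisting of eigenvectors of $L_{n,\psi,\phi}$. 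Boundedness (part of the hypothesis) makes the adjoint well defined, and it is then the diagonal operator $L_{n,\psi,\phi}^*\gamma_m=\overline{\lambda_m}\gamma_m$; the two commute termwise, $L_{n,\psi,\phi}L_{n,\psi,\phi}^*\gamma_m=|\lambda_m|^2\gamma_m=L_{n,\psi,\phi}^*L_{n,\psi,\phi}\gamma_m$, so $L_{n,\psi,\phi}$ is normal.

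The computations above are routine; the only delicate point is the reduction step, namely reading off from Theorem \ref{thm-2.1} that $\phi(0)=0$ collapses the symbols to $\phi(z)=cz$ and $\psi_k(z)=a_k z^k$, after which the role of complex symmetry has been fully used. An alternative route—computing $L_{n,\psi,\phi}L_{n,\psi,\phi}^*$ and $L_{n,\psi,\phi}^*L_{n,\psi,\phi}$ on reproducing kernels via Lemma \ref{lem-2.2} together with the identity $L_{n,\psi,\phi}=C_{\mu,\eta}L_{n,\psi,\phi}^*C_{\mu,\eta}$—is available but noticeably messier, so I would prefer the diagonalization argument.
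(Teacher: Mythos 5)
Your proposal is correct and follows essentially the same route as the paper: both reduce via Theorem \ref{thm-2.1} (with $\phi(0)=0$ forcing $b=0$, hence $\psi_k(z)=a_kz^k$ and $\phi(z)=cz$) and then exploit the diagonal action on the orthonormal basis $\{\gamma_m\}$ of monomials. The only cosmetic difference is that you conclude normality by exhibiting $L_{n,\psi,\phi}$ and its adjoint as commuting diagonal operators, whereas the paper's Theorem \ref{thm-2.2} verifies $\lVert L_{n,\psi,\phi}\gamma_m\rVert=\lVert L^*_{n,\psi,\phi}\gamma_m\rVert$ and invokes the criterion $\lVert Tf\rVert=\lVert T^*f\rVert$; your formulation is, if anything, slightly tighter at that last step.
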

By the hypothesis of the Corollary \ref{cor-2.4}, we can easily obtain from Corollary \ref{cor-2.2} that $\psi(z)=az$ and $\phi(z)=cz$, where $a, c\in\mathbb{C}$. Thus The Corollary \ref{cor-2.4} can be seen as analogous to the sufficient part of  \cite[Theorem 2.4]{Liu-Ponnusamy-Xie-LMA-2023}.
\begin{proof}[\bf Proof of Theorem \ref{thm-2.2}]
	Since  $ L_{n, \psi, \phi} $ is $ C_{\mu, \eta} $-symmetric on $ \mathcal{A}^2_{\alpha}(\mathbb{D}) $ and  $ \phi(0)=0 $, in view of Theorem $ \ref{thm-2.1} $, we have 
	\begin{align*}
		\psi_k(z)=a_kz^k\; \mbox{for}\; k=1, 2, \ldots, n\; \mbox{and}\; \phi(z)=cz,
	\end{align*} 
where $ a_1, a_2, \ldots, a_n, c\in\mathbb{C} $.\vspace{1.2mm}

	Then for any non-negative integer $ k $, a simple computation shows that
	\begin{align*}
		\lVert L_{n, \psi, \phi}\gamma_m \rVert^2&=\sum_{j=0}^{\infty}\lvert\langle L_{n, \psi, \phi}\gamma_m, \gamma_j\rangle\rvert^2\\&=\sum_{j=0}^{\infty}\bigg\lvert\bigg\langle \sum_{k=1}^{n}c_kD_{k, \psi_k, \phi}\gamma_m, \gamma_j\bigg\rangle\bigg\rvert^2\\&=\sum_{j=0}^{\infty}\bigg\lvert\bigg\langle \sum_{k=1}^{n}c_k\psi_k\gamma_m^{(k)}(\phi), \gamma_j\bigg\rangle\bigg\rvert^2\\&=\sum_{j=0}^{\infty}\bigg\lvert\bigg\langle \sum_{k=1}^{n}\frac{c_ka_k m!c^{m-k}}{(m-k)!}\sqrt{\frac{\Gamma(m+\alpha+2)}{m!\Gamma(\alpha+2)}}z^m, \sqrt{\frac{\Gamma(j+\alpha+2)}{j!\Gamma(\alpha+2)}}z^j\bigg\rangle\bigg\rvert^2.
	\end{align*}
	Moreover, on the other hand, we have
	\begin{align*}
		\lVert L^*_{n, \psi, \phi}\gamma_m \rVert^2&=\sum_{j=0}^{\infty}\lvert\langle L^*_{n, \psi, \phi}\gamma_m, \gamma_j\rangle\rvert^2\\&=\sum_{j=0}^{\infty}\lvert\langle \gamma_m, L_{n, \psi, \phi}\gamma_j\rangle\rvert^2\\&=\sum_{j=0}^{\infty}\bigg\lvert\bigg\langle \gamma_m, \sum_{k=1}^{n}c_kD_{k, \psi_k, \phi}\gamma_j\bigg\rangle\bigg\rvert^2\\&=\sum_{j=0}^{\infty}\bigg\lvert\bigg\langle \gamma_m, \sum_{k=1}^{n}c_k\psi_k\gamma_j^{(k)}(\phi)\bigg\rangle\bigg\rvert^2\\&=\sum_{j=0}^{\infty}\bigg\lvert\bigg\langle \sqrt{\frac{\Gamma(m+\alpha+2)}{m!\Gamma(\alpha+2)}}z^m, \sum_{k=1}^{n}\frac{c_ka_k j!c^{j-k}}{(j-k)!}\sqrt{\frac{\Gamma(j+\alpha+2)}{j!\Gamma(\alpha+2)}}z^j\bigg\rangle\bigg\rvert^2	
	\end{align*}
	Since the set $ \{\gamma_n:n\ge 0\} $ forms an orthonormal basis for $ \mathcal{A}^2_\alpha(\mathbb{D}) $, thus it follows that for any $ k\in\mathbb{N}\cup\{0\} $
	\begin{align*}
		\lVert L_{n, \psi, \phi}\gamma_m \rVert^2=\bigg\lvert\sum_{k=1}^{n}\frac{c_ka_km!c^{m-k}}{(m-k)!}\bigg\rvert^2=\lVert L^*_{n, \psi, \phi}\gamma_m \rVert^2.
	\end{align*}
	Consequently, in view of the above estimates, it follows that 
	\begin{align*}
		\lVert L_{n, \psi, \phi}(f) \rVert=\lVert L^*_{n, \psi, \phi}(f) \rVert\; \mbox{for all}\; f\in\mathcal{A}^2_{\alpha}(\mathbb{D}), 
	\end{align*}
	and $  L_{n, \psi, \phi} $ is a normal operator. This completes the proof.
\end{proof}
It should be emphasized that Theorem \ref{thm-2.2} does not encompass all normal weighted composition-differentiation operators on $ \mathcal{A}^2_{\alpha}(\mathbb{D}) $. It is of significance to highlight that we can precisely determine the normality of an operator $ L_{n, \psi, \phi} $ when $ \psi_k $ ($ k=1, 2, \ldots, n $) and $ \phi $ adopt a specific form that generalizes the symbols presented in the proof of Theorem \ref{thm-2.2}. Despite our efforts, we have not yet been able to establish whether an operator $ L_{n, \psi, \phi} $, where $ \phi(0)\neq 0 $, can exhibit normal behavior for different choices of $ \psi_k $ ($ k=1, 2, \ldots, n $) and $ \phi $.\vspace{1.2mm}

We know that a bounded linear operator $ T $ is said to be Hermitian( self-adjoint) if $ T^*=T $. In recent years, significant research has been focused on investigating the self-adjointness of weighted composition-differentiation operators acting on the reproducing kernel Hilbert space (see \cite{ Fatehi-Moradi-2021, Lim-Khoi-JMAA-2018, Lo-Loh-JMAA-2023} and references therein). In  \cite{Fatehi-Hammond-CAOT-2021}, Fatehi and Hummond provide a complete characterization of self-adjointness of weighted composition-differentiation operator on the Hardy space. In \cite{Liu-Ponnusamy-Xie-LMA-2023}, Liu \emph{et al.} have discussed the self-adjointness of the weighted composition-differentiation operator on the Bergman space $ \mathcal{A}^2_{\alpha}(\mathbb{D}) $. In the following result, we establish a condition that is both necessary and sufficient for the bounded operator $ L_{n, \psi, \phi} $ to satisfy Hermitian properties on $ \mathcal{A}^2_{\alpha}(\mathbb{D}) $. 
\begin{thm}\label{thm-2.3}
	Let $ \phi $ be an analytic self-map of $\mathbb{D} $ and $ \psi_k $ (where $ k=1, 2, \ldots, n $) be nonzero analytic functions on the unit disc such that $ L_{n, \psi, \phi}=\sum_{k=1}^{n}c_kD_{k, 
		\psi_k, \phi} $ is bounded on $ \mathcal{A}^2_{\alpha}(\mathbb{D}) $ and $ c_1, c_2, \ldots, c_n\in\mathbb{R} $. Then $ L_{n, \psi, \phi} $ is Hermitian if, and only if,
	\begin{align*}
		\psi_k(z)=\frac{a_kz^k}{(1-\overline{b}z)^{k+\alpha+2}}\; \mbox{for}\; k=1, 2, \ldots ,n\; \mbox{and}\; \phi(z)=b+\frac{cz}{(1-\overline{b}z)}
	\end{align*}
	for some $ a_1, a_2, \ldots, a_n, c\in\mathbb{R} $ and $ b\in\mathbb{D} $.
\end{thm}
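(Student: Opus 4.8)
The plan is to characterize Hermitianness $L_{n,\psi,\phi}^* = L_{n,\psi,\phi}$ by testing the identity on reproducing kernels, exactly as in the proof of Theorem \ref{thm-2.1}, and to exploit the observation that a Hermitian operator is in particular complex symmetric with respect to any conjugation fixing its kernel functions appropriately — more concretely, by relating the self-adjointness condition to the $C_{\mu,\eta}$-symmetry condition already solved in Theorem \ref{thm-2.1}. First I would apply $L_{n,\psi,\phi}$ directly to the kernel $K_w$: since $L_{n,\psi,\phi}K_w(z) = \sum_{k=1}^n c_k \psi_k(z) K_w^{(k)}(\phi(z))$ and one computes $K_w^{(k)}(u) = p_k \bar w^{\,k}/(1-\bar w u)^{k+\alpha+2}$, we get
\begin{align*}
L_{n,\psi,\phi}K_w(z) = \sum_{k=1}^n \frac{c_k p_k \bar w^{\,k}\,\psi_k(z)}{(1-\bar w\,\phi(z))^{k+\alpha+2}}.
\end{align*}
On the other hand, Lemma \ref{lem-2.2} gives $L_{n,\psi,\phi}^* K_w(z) = \sum_{k=1}^n \overline{c_k\psi_k(w)} K^{[k]}_{\phi(w)}(z) = \sum_{k=1}^n \overline{c_k\psi_k(w)}\, p_k z^k/(1-\overline{\phi(w)}z)^{k+\alpha+2}$. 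Since $c_k\in\mathbb{R}$, equating these two expressions (using density of kernel functions) and, as in Theorem \ref{thm-2.1}, peeling off the $k$-th term (this can be justified by comparing orders of vanishing at $z=0$ and $w=0$, or by analytic independence of the summands) yields, for each $k=1,\dots,n$ and all $z,w\in\mathbb{D}$,
\begin{align*}
\frac{\bar w^{\,k}\,\psi_k(z)}{(1-\bar w\,\phi(z))^{k+\alpha+2}} = \frac{\overline{\psi_k(w)}\, z^k}{(1-\overline{\phi(w)}\,z)^{k+\alpha+2}}.
\end{align*}

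Next I would extract the forms of $\psi_k$ and $\phi$ from this functional equation. Setting $z=0$ forces $\psi_k(0)=0$; writing $\psi_k(z)=z^{m}g_k(z)$ with $g_k(0)\neq0$ and running the same two-case argument (Case I: $m>k$, Case II: $m<k$) from the proof of Theorem \ref{thm-2.1} — each time plugging $w=0$ — forces $m=k$. Then the equation reduces to $g_k(z)/(1-\bar w\,\phi(z))^{k+\alpha+2} = \overline{g_k(w)}/(1-\overline{\phi(w)}z)^{k+\alpha+2}$; putting $w=0$ gives $g_k(z) = \overline{g_k(0)}/(1-\overline{\phi(0)}z)^{k+\alpha+2}$, so with $b:=\phi(0)$ and $a_k:=\overline{g_k(0)}$ we obtain $\psi_k(z) = a_k z^k/(1-\bar b z)^{k+\alpha+2}$. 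Substituting back and differentiating with respect to $w$ at $w=0$ (as in \eqref{Eq-2.6}) recovers $\phi(z) = b + cz/(1-\bar b z)$ with $c=\phi'(0)$. The reality of the parameters is the genuinely new ingredient here: consistency of $\psi_k(z) = z^k g_k(z)$ with $g_k(z) = \overline{g_k(0)}/(1-\bar b z)^{k+\alpha+2}$, combined with the original symmetric functional equation evaluated at, say, $z=w$, forces $a_k = \overline{a_k}$ and $c = \bar c$, i.e.\ $a_k,c\in\mathbb{R}$ and, since $b=\phi(0)$ is just a point of $\mathbb{D}$, $b\in\mathbb{D}$ with the conjugate $\bar b$ appearing as written. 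I would verify the reality carefully by noting that swapping $z\leftrightarrow w$ and conjugating must return the same equation.

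For the converse, I would simply substitute the claimed forms of $\psi_k$ and $\phi$ into both $L_{n,\psi,\phi}K_w(z)$ and $L_{n,\psi,\phi}^*K_w(z)$ and check they agree; the computation is essentially identical to \eqref{Eq-2.7}--\eqref{Eq-2.8} in the proof of Theorem \ref{thm-2.1}, except that with $c_k,a_k,c\in\mathbb{R}$ and $\bar b$ in place of $\eta b$ the two sides collapse to the same expression $\sum_k c_k p_k a_k (\bar w\bar z \text{ or } wz)^k/(\text{symmetric denominator})^{k+\alpha+2}$ — one checks the denominator $(1-\bar b z)(1 - \bar w\phi(z)) = 1 - \bar b z - \bar b\bar w - \bar c\bar w z + |b|^2$-type terms is symmetric under $z\leftrightarrow w$ in the appropriate conjugated sense. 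Density of $\mathrm{span}\{K_w : w\in\mathbb{D}\}$ then gives $L_{n,\psi,\phi}^* = L_{n,\psi,\phi}$. The main obstacle I anticipate is the bookkeeping needed to justify term-by-term separation of the sum over $k$ (the summands have distinct leading powers $z^k$ and distinct denominator exponents, so they are linearly independent as functions, but this deserves an explicit argument) and, more subtly, nailing down precisely why the parameters must be real rather than merely satisfying some conjugation-symmetry relation; I would handle the latter by carefully tracking complex conjugates through the functional equation and using the $z=w$ diagonal.
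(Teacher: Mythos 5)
Your proposal follows essentially the same route as the paper's proof: test $L_{n,\psi,\phi}^*K_w=L_{n,\psi,\phi}K_w$ on reproducing kernels via Lemma \ref{lem-2.2}, reduce to the componentwise functional equation \eqref{Eq-2.12}, run the same two-case order-of-vanishing argument to force $\psi_k(z)=a_kz^k/(1-\overline{b}z)^{k+\alpha+2}$, recover $\phi$ by differentiating at $w=0$, and obtain reality of $a_k$ and $c$ by substituting back (the paper sets $w=0$ in \eqref{Eq-2.14} rather than using the diagonal $z=w$, but this is the same idea); the converse is the identical kernel computation. The term-by-term separation of the sum over $k$ that you flag as needing justification is likewise asserted without proof in the paper, so your treatment matches the paper's in both substance and its gaps.
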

Based on the implications of Theorem \ref{thm-2.3}, it is straightforward to derive the following results.
\begin{cor}\cite[Theorem 2.4]{Allu-Hal-Pal-2023}
		Let $ \phi $ be an analytic self-map of $\mathbb{D} $ and $ \psi $ be a nonzero analytic functions on the unit disk $\mathbb{D} $  such that $ D_{n, \psi, \phi}$ is bounded on $ \mathcal{A}^2_{\alpha}(\mathbb{D}) $. Then $ D_{n, \psi, \phi} $ is Hermitian if, and only if,
	\begin{align*}
		\psi(z)=\frac{az^n}{(1-\overline{b}z)^{n+\alpha+2}}\; \mbox{and}\; \phi(z)=b+\frac{cz}{(1-\overline{b}z)}
	\end{align*}
	for some $ a, c\in\mathbb{R} $ and $ b\in\mathbb{D} $.
\end{cor}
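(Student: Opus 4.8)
The plan is to derive the Hermitian characterization by combining the adjoint formula from Lemma~\ref{lem-2.2} with the complex symmetry result of Theorem~\ref{thm-2.1}, exploiting the fact that a Hermitian operator is automatically complex symmetric with respect to a canonical conjugation. First I would observe that if $L_{n,\psi,\phi}$ is Hermitian then $L^*_{n,\psi,\phi}=L_{n,\psi,\phi}$, and in particular $L_{n,\psi,\phi}$ is $C_{1,1}$-symmetric (i.e.\ $C_{\mu,\eta}$-symmetric with $\mu=\eta=1$), since for a self-adjoint operator $T=T^*=\mathcal{C}T^*\mathcal{C}$ holds with the conjugation $\mathcal{C}=C_{1,1}$ provided one checks $C_{1,1}$ is indeed a conjugation on $\mathcal{A}^2_\alpha(\mathbb{D})$ and that $T$ commutes with it appropriately; more directly, a real-coefficient operator that is Hermitian is known to be $\mathcal{J}$-symmetric for the standard conjugation $\mathcal{J}f(z)=\overline{f(\bar z)}$. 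Applying Theorem~\ref{thm-2.1} with $\eta=1$ then forces
\begin{align*}
	\psi_k(z)=\frac{a_kz^k}{(1-bz)^{k+\alpha+2}},\quad \phi(z)=b+\frac{cz}{1-bz},
\end{align*}
for some $a_k,c\in\mathbb{C}$ and $b\in\mathbb{D}$; the remaining task is to pin down $b=\bar b$ replaced by the correct factor, i.e.\ to show the denominator is really $(1-\bar bz)$ and that $a_k,c\in\mathbb{R}$.

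To extract these reality constraints, the key step is to compute $L^*_{n,\psi,\phi}K_w$ via Lemma~\ref{lem-2.2} and also compute $L_{n,\psi,\phi}K_w$ directly, then impose $\langle L_{n,\psi,\phi}K_w, K_v\rangle = \langle K_w, L_{n,\psi,\phi}K_v\rangle$ for all $v,w\in\mathbb{D}$, which is equivalent to $L^*_{n,\psi,\phi}K_w = L_{n,\psi,\phi}K_w$ as functions. Using the reproducing property, $\langle K_w, L_{n,\psi,\phi}K_v\rangle = \overline{(L_{n,\psi,\phi}K_v)(w)}$ while $\langle L_{n,\psi,\phi}K_w, K_v\rangle = (L_{n,\psi,\phi}K_w)(v)$, so Hermiticity is the functional identity
\begin{align*}
	\sum_{k=1}^{n} c_k p_k \frac{\psi_k(z)\,\bar w^{k}}{(1-\bar w\phi(z))^{k+\alpha+2}} = \overline{\sum_{k=1}^{n} c_k p_k \frac{\psi_k(w)\,\bar z^{k}}{(1-\bar z\phi(w))^{k+\alpha+2}}}
\end{align*}
holding for all $z,w\in\mathbb{D}$ (here I have used $L_{n,\psi,\phi}K_w(z)=\sum_k c_k\psi_k(z)\partial_z^{k}[(1-\bar wz)^{-(\alpha+2)}]|_{z\to\phi(z)}$, which produces exactly the same kernel shape as in \eqref{Eq-2.2}). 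Substituting the already-obtained forms of $\psi_k$ and $\phi$ and separating the powers of $z^k\bar w^k$ on each side (valid because the $\psi_k$ are nonzero and the kernels $K^{[k]}$ for distinct $k$ are linearly independent), I get for each $k$ an identity of the form
\begin{align*}
	\frac{a_k z^k \bar w^k}{\big(1-bz-\bar w b + (b^2-c)\bar w z\big)^{k+\alpha+2}} = \frac{\bar a_k z^k \bar w^k}{\big(1-\bar b z - \bar w\bar b + (\bar b^2-\bar c)\bar w z\big)^{k+\alpha+2}},
\end{align*}
after the same algebraic simplification of the composed denominator that appears in \eqref{Eq-2.7}--\eqref{Eq-2.8}. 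Matching coefficients of the polynomial in the denominator (the coefficient of $z$, of $\bar w$, and of $\bar w z$) forces $b=\bar b$\,---\,wait, more precisely it forces the symbol data to be real in the appropriate sense: $b$ need not be real but the denominator must read $(1-\bar bz)$, giving $\phi(z)=b+cz/(1-\bar bz)$; then comparing the remaining coefficient yields $c=\bar c$, and finally comparing numerators gives $a_k=\bar a_k$, i.e.\ $c\in\mathbb{R}$ and $a_k\in\mathbb{R}$. Combining with $c_k\in\mathbb{R}$ (a hypothesis) completes the ``only if'' direction.

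For the converse, I would assume $\psi_k(z)=a_kz^k/(1-\bar bz)^{k+\alpha+2}$ and $\phi(z)=b+cz/(1-\bar bz)$ with all $a_k,c,c_k$ real and $b\in\mathbb{D}$, and simply verify $L^*_{n,\psi,\phi}K_w=L_{n,\psi,\phi}K_w$ on kernels by the same computation as \eqref{Eq-2.7}--\eqref{Eq-2.8} run with $\mu=\eta=1$: both sides reduce to $\sum_k c_k p_k a_k (zw)^k\big(1-\bar bz-\bar bw+(|b|^2{-}c)\,? \big)^{-(k+\alpha+2)}$ with a denominator that is manifestly symmetric in $z\leftrightarrow w$ up to complex conjugation, and the reality of the constants makes it literally symmetric, hence $\langle L_{n,\psi,\phi}K_w,K_v\rangle=\overline{\langle L_{n,\psi,\phi}K_v,K_w\rangle}$; since the kernel functions span a dense subspace of $\mathcal{A}^2_\alpha(\mathbb{D})$, $L^*_{n,\psi,\phi}=L_{n,\psi,\phi}$. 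The main obstacle I anticipate is the coefficient-matching step in the ``only if'' direction: one must carefully expand the composed rational denominator $1-\bar w\phi(z)$ after substituting $\phi$, and argue that the resulting polynomial identity in $z,\bar w$ of degree-$(k+\alpha+2)$ denominators forces term-by-term equality of the base polynomials\,---\,this requires knowing that two such rational functions agree only if their numerators and denominators agree up to a constant, and then tracking that constant through. A secondary subtlety is justifying the separation into individual $k$-summands: one needs that the functions $z\mapsto z^k/(1-\bar w\phi(z))^{k+\alpha+2}$ for $k=1,\dots,n$ are linearly independent over $\mathbb{C}$ for generic $w$, which follows from their distinct orders of vanishing at $z=0$ exactly as in the proof of Theorem~\ref{thm-2.1}. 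Everything else is bookkeeping parallel to the proof of Theorem~\ref{thm-2.1}.
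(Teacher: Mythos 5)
The paper obtains this corollary purely by specializing Theorem~\ref{thm-2.3} (take $c_1=\cdots=c_{n-1}=0$, $c_n=1$), and the proof of Theorem~\ref{thm-2.3} is a direct argument from the Hermitian functional identity $L^*_{n,\psi,\phi}K_w=L_{n,\psi,\phi}K_w$. The second half of your proposal is essentially that argument, and it is sound. The problem is your first step: the reduction ``Hermitian $\Rightarrow$ $C_{1,1}$-symmetric, hence apply Theorem~\ref{thm-2.1} with $\eta=1$'' is not justified and is in fact false for this class of operators unless $\phi(0)\in\mathbb{R}$. A self-adjoint $T$ satisfies $T=\mathcal{C}T^*\mathcal{C}$ for a \emph{prescribed} conjugation $\mathcal{C}$ only if $T$ commutes with $\mathcal{C}$, which you cannot assume a priori; and comparing the two characterizations, a Hermitian $D_{n,\psi,\phi}$ has $\psi(z)=az^n/(1-\overline{b}z)^{n+\alpha+2}$ while a $C_{1,1}$-symmetric one has $\psi(z)=az^n/(1-bz)^{n+\alpha+2}$, so the two can only coincide when $b=\overline{b}$. (Every Hermitian operator is complex symmetric with respect to \emph{some} conjugation, but Theorem~\ref{thm-2.1} only applies once $\mu,\eta$ are specified.) This error propagates: the displayed identity you then match coefficients in carries the denominator $1-bz-\overline{w}b+(b^2-c)\overline{w}z$ on the left, and equating it with its conjugate version forces $b=\overline{b}$, i.e.\ $b$ real --- strictly narrower than the corollary's conclusion $b\in\mathbb{D}$. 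You notice this tension mid-argument (``forces $b=\overline{b}$ --- wait\dots'') but do not resolve it.

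The fix is to delete the complex-symmetry detour entirely and extract the form of $\psi$ and $\phi$ directly from the Hermitian identity, as the paper does: from \eqref{Eq-2.12} with $w=0$ one gets $\psi_k(0)=0$; the order-of-vanishing dichotomy (your ``secondary subtlety,'' handled exactly as in Theorem~\ref{thm-2.1}) gives $\psi_k(z)=z^kg_k(z)$; setting $w=0$ in the reduced identity \eqref{Eqq-2.14} yields $g_k(z)=\overline{g_k(0)}/(1-\overline{\phi(0)}z)^{k+\alpha+2}$, where the conjugate on $\phi(0)$ appears automatically because the adjoint side of the identity involves $\overline{\phi(w)}$ --- this is precisely how the correct factor $(1-\overline{b}z)$ arises without ever forcing $b$ real. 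Substituting back and differentiating with respect to $\overline{w}$ then gives $\phi(z)=b+cz/(1-\overline{b}z)$ and the reality of $a$ and $c$. Your converse direction is fine and matches the paper's computation.
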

\begin{cor}\label{cor-2.5}
	Let $ \phi $ be an analytic self-map of $\mathbb{D} $ and $ \psi $ be a nonzero analytic functions on the unit disk $\mathbb{D} $  such that $ D_{\psi, \phi}$ is bounded on $ \mathcal{A}^2_{\alpha}(\mathbb{D}) $. Then $ D_{\psi, \phi} $ is Hermitian if, and only if,
	\begin{align*}
		\psi(z)=\frac{az}{(1-\overline{b}z)^{\alpha+3}}\; \mbox{and}\; \phi(z)=b+\frac{cz}{(1-\overline{b}z)}
	\end{align*}
	for some $ a, c\in\mathbb{R} $ and $ b\in\mathbb{D} $.
\end{cor}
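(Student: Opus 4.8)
The plan is to characterise the Hermitian property of $L_{n,\psi,\phi}$ through its action, and that of its adjoint, on the reproducing kernels $K_w$, paralleling the proof of Theorem~\ref{thm-2.1}. Since the linear span of $\{K_w:w\in\mathbb{D}\}$ is dense in $\mathcal{A}^2_\alpha(\mathbb{D})$ and both operators are bounded, $L_{n,\psi,\phi}$ is Hermitian if and only if $L_{n,\psi,\phi}K_w=L^*_{n,\psi,\phi}K_w$ for every $w\in\mathbb{D}$. Differentiating $K_w$ $k$ times and using $c_k\in\mathbb{R}$ together with Lemma~\ref{lem-2.2}, this amounts to the identity
\begin{align*}
\sum_{k=1}^{n}\frac{c_kp_k\overline{w}^k\psi_k(z)}{(1-\overline{w}\phi(z))^{k+\alpha+2}}=\sum_{k=1}^{n}\frac{c_kp_k\overline{\psi_k(w)}\,z^k}{(1-\overline{\phi(w)}z)^{k+\alpha+2}}\qquad(z,w\in\mathbb{D}),
\end{align*}
and the whole theorem reduces to analysing it. One may assume every $c_k\neq0$, since the terms with $c_k=0$ do not enter $L_{n,\psi,\phi}$.

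For the sufficiency I would substitute the claimed forms directly. Writing $\phi(z)=b+cz/(1-\overline{b}z)$ one finds $1-\overline{w}\phi(z)=Q(z,w)/(1-\overline{b}z)$ with $Q(z,w)=1-b\overline{w}-\bigl(\overline{b}+\overline{w}(c-|b|^2)\bigr)z$, and the factor $(1-\overline{b}z)^{k+\alpha+2}$ cancels the denominator of $\psi_k$, so the $k$-th summand on the left equals $c_kp_ka_k\,\overline{w}^kz^k/Q(z,w)^{k+\alpha+2}$. Since $a_k$ and $c$ are real, $\overline{\psi_k(w)}=a_k\overline{w}^k/(1-b\overline{w})^{k+\alpha+2}$ and $1-\overline{\phi(w)}z=Q(z,w)/(1-b\overline{w})$ with the \emph{same} polynomial $Q$, so the $k$-th summand on the right is the identical expression; adding over $k$ gives $L_{n,\psi,\phi}K_w=L^*_{n,\psi,\phi}K_w$. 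The reality of $a_k$, $c$ (and $c_k$) is precisely what is needed to move the conjugations past $\psi_k$ and $\phi$.

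For the necessity I would regard $\overline{w}$ as an independent holomorphic variable (so that $\overline{\psi_k(w)}$ and $\overline{\phi(w)}$ become holomorphic) and first separate the index $k$ in the summed identity, exactly as one passes to \eqref{Eq-2.4} in the proof of Theorem~\ref{thm-2.1}, obtaining for each $k$
\begin{align}\label{plan-term}
\frac{\psi_k(z)\,\overline{w}^k}{(1-\overline{w}\phi(z))^{k+\alpha+2}}=\frac{\overline{\psi_k(w)}\,z^k}{(1-\overline{\phi(w)}z)^{k+\alpha+2}}\qquad(z,w\in\mathbb{D}).
\end{align}
Setting $w=0$ forces $\psi_k(0)=0$; writing $\psi_k(z)=z^mg_k(z)$ with $g_k(0)\neq0$, cancelling $z^{\min(m,k)}\overline{w}^{\min(m,k)}$ in \eqref{plan-term} and letting $w=0$ rules out $m>k$ and $m<k$, so $m=k$. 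Then $w=0$ in \eqref{plan-term} gives $g_k(z)=\overline{g_k(0)}\,(1-\overline{b}z)^{-(k+\alpha+2)}$ with $b=\phi(0)$, i.e.\ $\psi_k(z)=a_kz^k/(1-\overline{b}z)^{k+\alpha+2}$ with $a_k=\overline{g_k(0)}$. Inserting this back, cancelling $z^k\overline{w}^k$ and evaluating at $z=w=0$ yields $a_k=\overline{a_k}$, hence $a_k\in\mathbb{R}$, and after extracting $(k+\alpha+2)$-th roots near the origin one gets $(1-\overline{b}z)(1-\overline{w}\phi(z))=(1-b\overline{w})(1-\overline{\phi(w)}z)$ on $\mathbb{D}\times\mathbb{D}$. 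Differentiating this in the variable $\overline{w}$ at $\overline{w}=0$ gives $\phi(z)=b+\overline{\phi'(0)}\,z/(1-\overline{b}z)$, and comparing the coefficient of $z$ forces $\phi'(0)=\overline{\phi'(0)}$, so $c:=\phi'(0)\in\mathbb{R}$ and $\phi$ has the asserted form.

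The step I expect to be the main obstacle is the separation of the summands for different $k$, i.e.\ the passage to \eqref{plan-term}; this is the same delicate point already present in the proof of Theorem~\ref{thm-2.1}. I would settle it by an order-of-vanishing argument at the origin: for fixed $z$ the left-hand sum, multiplied by $(1-\overline{w}\phi(z))^{n+\alpha+2}$, is a polynomial in $\overline{w}$ of degree at most $n$ with vanishing constant term, which, matched against the right-hand side, lets one peel off the contributions of $k=1,2,\dots,n$ one at a time. Once \eqref{plan-term} is available the remaining computations are routine, and the only genuinely new feature compared with the complex-symmetry characterisation of Theorem~\ref{thm-2.1} is that the reality of $a_k$ and $c$ falls out automatically rather than being imposed.
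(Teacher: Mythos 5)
Your proposal is correct and follows essentially the same route as the paper, which obtains Corollary \ref{cor-2.5} as the $n=1$ specialization of Theorem \ref{thm-2.3}, itself proved by exactly the reproducing-kernel identity, the $w=0$ substitutions with $\psi_k(z)=z^mg_k(z)$, and the $\overline{w}$-differentiation that you describe. The only point you flag as delicate --- separating the summands over $k$ --- is vacuous here since $n=1$ leaves a single term (and the paper indeed passes to the term-by-term identity without further justification in the general case).
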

It is worth noticing that Corollary \ref{cor-2.5} and \cite[Theorem 2.3]{Liu-Ponnusamy-Xie-LMA-2023} have a similar characterization, making them analogues of each other.
\begin{proof}[\bf Proof of Theorem \ref{thm-2.3}]
	Let $ L_{n, \psi, \phi} $ be Hermitian. Then we have $ L^*_{n, \psi, \phi} $ = $ L_{n, \psi, \phi} $, which is equivalent to
	\begin{align}\label{Eq-2.9}
		L^*_{n, \psi, \phi}K_w(z)=L_{n, \psi, \phi}K_w(z)\; \mbox{for all}\; z\in\mathbb{D}.
	\end{align}
Using Lemma \ref{lem-2.1}, a simple computation shows that
	\begin{align}\label{eq-2.10}
		L^*_{n, \psi, \phi}K_w(z)=\sum_{k=1}^{n}\overline{c_k\psi_k(w)}K^{[k]}_{\phi(w)}=\sum_{k=1}^{n}\overline{c_k\psi_k(w)}\frac{p_kz^k}{(1-\overline{\phi(w)}z)^{k+\alpha+2}}
	\end{align}
	and
	\begin{align}\label{Eq-2.11}
		L_{n, \psi, \phi}K_w(z)=\sum_{k=1}^{n}c_kD_{k, 
			\psi_k, \phi}K_w(z)=\sum_{k=1}^{n}c_k\psi_k(z)\frac{p_k\overline{w}^k}{(1-\overline{w}\phi(z))^{k+\alpha+2}}.
	\end{align} 
	Substituting \eqref{eq-2.10} and \eqref{Eq-2.11} into \eqref{Eq-2.9}, we obtain
	\begin{align*}
		\sum_{k=1}^{n}\overline{c_k\psi_k(w)}\frac{p_kz^k}{(1-\overline{\phi(w)}z)^{k+\alpha+2}}=\sum_{k=1}^{n}c_k\psi_k(z)\frac{p_k\overline{w}^k}{(1-\overline{w}\phi(z))^{k+\alpha+2}}.
	\end{align*} 
	This implies that 
	\begin{align}\label{Eq-2.12}
		\frac{z^k\overline{\psi_k(w)}}{(1-\overline{\phi(w)}z)^{k+\alpha+2}}=\frac{\overline{w}^k\psi_k(z)}{(1-\overline{w}\phi(z))^{k+\alpha+2}}\; \mbox{for all}\; k=1, 2, \ldots,n
	\end{align}
	for all $ z,w\in\mathbb{D} $.\vspace{1.2mm} 
	
	Letting $ w=0 $ in the above equality \eqref{Eq-2.12}, we obtain  $ \psi_k(0)=0 $ for all $ k=1, 2, \ldots,n $. For $ \psi_k(z)\in\mathcal{A}^2_{\alpha}(\mathbb{D}) $, if $ \psi_k(z)=z^mg_k(z) $, where $ m\in\mathbb{N} $ and $ g_k $ is analytic with $ g_k(0)\neq 0 $, then we aim to show that $ m=k $.\\
	
	\noindent {\bf Case I}: If $ m>k $, then it follows from \eqref{Eq-2.12} that
	\begin{align}\label{Eq-2.13}
		\frac{(\overline{w})^{m-k}\overline{g_k(w)}}{(1-\overline{\phi(w)}z)^{k+\alpha+2}}=\frac{z^{m-k}g_k(z)}{(1-\overline{w}\phi(z))^{k+\alpha+2}}
	\end{align}
	for all $ z,w\in\mathbb{D} $. Setting $ w=0 $ in \eqref{Eq-2.13}, we obtain $ g_k(z)=0 $ on $ \mathbb{D} $, which is a contradiction to the assumption that $ g_k(0)=0 $.\\ 
	
	\noindent {\bf Case II}: If $ m<k $, then from \eqref{Eq-2.12}, we obtain 
	\begin{align}\label{eq-2.14}
		\frac{z^{k-m}\overline{g_k(w)}}{(1-\overline{\phi(w)}z)^{k+\alpha+2}}=\frac{(\overline{w})^{k-m}g_k(z)}{(1-\overline{w}\phi(z))^{k+\alpha+2}}	
	\end{align}
	for all $ z,w\in\mathbb{D} $. Putting $ w=0 $ in \eqref{eq-2.14}, we obtain $ g_k(0)=0 $, which is a contradiction. Therefore, we must have $ m=k $. Now, \eqref{Eq-2.12} reduces to
	\begin{align}\label{Eqq-2.14}
		\frac{\overline{g_k(w)}}{(1-\overline{\phi(w)}z)^{k+\alpha+2}}=\frac{g_k(z)}{(1-\overline{w}\phi(z))^{k+\alpha+2}}
	\end{align}
	for all $ z,w\in\mathbb{D} $. Putting $ w=0 $ in \eqref{Eqq-2.14}, we obtain 
	\begin{align*}
		g_k(z)=\frac{\overline{g_k(0)}}{(1-\overline{\phi(0)}z)^{k+\alpha+2}}\; \mbox{for}\; k=1, 2, \ldots,n.
	\end{align*}
	Thus, we have
	\begin{align}\label{2.13}
		\psi_k(z)=\frac{a_kz^k}{(1-\overline{b}z)^{k+\alpha+2}}\; \mbox{where}\; a_k=\overline{g_k(0)}\; \mbox{and}\; b=\phi(0)\; \mbox{for}\; k=1, 2, \ldots,n.
	\end{align}
	Substituting \eqref{2.13} into \eqref{Eq-2.12}, we obtain
	\begin{align}\label{Eq-2.14}
		a_k(1-b\overline{w})^{k+\alpha+2}(1-\overline{\phi(w)}z)^{k+\alpha+2}=\overline{a_k}(1-\overline{b}z)^{k+\alpha+2}(1-\overline{w}\phi(z))^{k+\alpha+2}
	\end{align} 
	for all $ z,w\in\mathbb{D} $. If we set $ w=0 $ in \eqref{Eq-2.14}, then it is easy to see that $ a_k=\overline{a_k} $, and hence $ a_k\in\mathbb{R} $ for all $ k=1, 2, \ldots,n $.\vspace{1.2mm} 
	
	Differentiating both sides of \eqref{Eq-2.14} with respect to $ \overline{w} $, we obtain 
	\begin{align}\label{Eq-2.15}
		(1-&\overline{b}z)^{k+\alpha+2}(1-\overline{w}\phi(z))^{k+\alpha+1}\phi(z)=b(1-b\overline{w})^{k+\alpha+1}(1-\overline{\phi(w)}z)^{k+\alpha+2}\\&\quad+(1-b\overline{w})^{k+\alpha+2}(1-\overline{\phi(w)}z)^{k+\alpha+1}(\overline{z\phi^{\prime}(w)})\; \mbox{for all}\; z\in\mathbb{D}\nonumber.
	\end{align}
	Setting $ w=0 $, we easily obtain from \eqref{Eq-2.15} that
	\begin{align*}
		\left(1-\overline{\phi(0)}z\right)^{k+\alpha+2}\phi(z)=\left(1-\overline{\phi(0)}z\right)^{k+\alpha+2}b+\left(1-\overline{\phi(0)}z\right)^{k+\alpha+1}\overline{\phi^{\prime}(0)}z
	\end{align*}
	which implies that 
	\begin{align}\label{Eq-1.19}
		\phi(z)=b+\frac{\phi^{\prime}(0)z}{1-\overline{\phi(0)}z}=b+\frac{cz}{1-\bar{b}z},
	\end{align}
	where $ \overline{\phi^{\prime}(0)}=c $ and $ \phi(0)=b $.\\
	
	Differentiating \eqref{Eq-1.19}, we obtain $ \phi^{\prime}(0)=c $. Therefore, we see that $ c=\bar{c} $ which implies that $ c $ is a real number. \vspace{1.2mm}
	
	Conversely, we assume that 
	\begin{align*}
		\psi_k(z)=\frac{a_kz^k}{(1-\bar{b}z)^{k+\alpha+2}}\; \mbox{for}\;  k=1, 2, \ldots, n\; \mbox{and}\; \phi(z)=b+\frac{cz}{1-\bar{b}z},
	\end{align*}
	where $ a_1, a_2, \ldots, a_n\in\mathbb{R} $.\vspace{2mm}
	
	Then, in view of \eqref{eq-2.10} and \eqref{Eq-2.11}, a straightforward computation gives that 
	\begin{align}\label{Eq-2.16}
		L_{n, \psi, \phi}K_w(z)&=\sum_{k=1}^{n}c_k\psi_k(z)\frac{p_k\overline{w}^k}{(1-\overline{w}\phi(z))^{k+\alpha+2}}\\&=\sum_{k=1}^{n}a_kc_k\left(\frac{z^k}{(1-\bar{b}z)^{k+\alpha+2}}\right)\frac{p_k\overline{w}^k}{\left(1-\overline{w}\left(b+\frac{cz}{1-\bar{b}z}\right)\right)^{k+\alpha+2}}\nonumber\\&=\sum_{k=1}^{n}\frac{c_ka_kp_k(z\overline{w})^k}{\left(1-\overline{b}z-b\overline{w}+\overline{w}|b|^2z-\overline{w}cz\right)^{k+\alpha+2}}\nonumber
	\end{align}
	and
	\begin{align}\label{eq-2.17}
		L^*_{n, \psi, \phi}K_w(z)&=\sum_{k=1}^{n}\overline{c_k\psi_k(w)}\frac{p_kz^k}{(1-z\overline{\phi(w)})^{k+\alpha+2}}\\&=\sum_{k=1}^{n}\overline{a_kc_k}\left(\frac{\overline{w}^k}{(1-{b}\overline{w})^{k+\alpha+2}}\right)\frac{p_kz^k}{\left(1-z\left(\overline{b}+\dfrac{\overline{cw}}{1-\bar{b}\overline{w}}\right)\right)^{k+\alpha+2}}\nonumber\\&=\sum_{k=1}^{n}\frac{\overline{c_ka_k}p_k(z\overline{w})^k}{\left(1-\overline{b}z-b\overline{w}+\overline{w}|b|^2z-\overline{w}cz\right)^{k+\alpha+2}}\nonumber.
	\end{align}
	Since $ a_k $ and $ c_k $ are reals, hence $ \overline{a_kc_k}=a_kc_k $. Therefore, from \eqref{Eq-2.16} and \eqref{eq-2.17} we see that $ L^*_{n, \psi, \phi}K_w(z)=L_{n, \psi, \phi}K_w(z) $ for  all $ z\in\mathbb{D} $. Since the space of reproducing kernel function is dense in $ \mathcal{A}^2_{\alpha}(\mathbb{D}) $, it follows that $ L^*_{n, \psi, \phi}=L_{n, \psi, \phi} $  which shows that $ L_{n, \psi, \phi} $ is Hermitian. This completes the proof.
\end{proof}
\section{Kernel of generalized composition-differentiation operators on weighted Bergman space $ \mathcal{A}^2_{\alpha}(\mathbb{D}) $}
 The kernel of linear operator T on a Hilbert space $ \mathcal{H} $ is denoted by $ ker(T) $ and is defined as $ ker(T)=\{x\in\mathcal{H}:T(x)=0\} $. The investigation on finding the kernel of composition operators precisely on the domain of analytic functions holds great importance as it introduces a novel aspect to operator theory. For instance, it follows from \cite[Proposition 3.2]{Jun-Kim-Ko-Lee-JFA-2014} that the kernel of the adjoint of a complex
symmetric weighted composition operator on $ {H}^2(\mathbb{D}) $ is $ \{0\} $. Moreover, the result \cite[Proposition 2.9]{Han-Wang-BJMA-2021} shows that the kernel of the adjoint of a nonzero $ J $-symmetric weighted composition–
differentiation operator on Hardy Hilbert space $ {H}^2(\mathbb{D}) $ is the whole complex plane $ \mathbb{C} $. We obtain the following result exploring that the kernel of the adjoint of a nonzero  $ C_{\mu, \eta} $-symmetric generalized composition-differentiation operator $ L_{n,\psi, \phi}=\sum_{k=1}^{n}c_kD_{k, \psi_k, \phi} $ on the Weighted Bergman Space  $ \mathcal{A}^2_{\alpha}(\mathbb{D}) $ is $ \mathbb{P}_{m-1}(\mathbb{C}) $, the set of all polynomials of degree less than or equals to $(m-1)$, where  $ m=\min\{k:c_k\neq0\}	$.
\begin{thm}\label{thm-3.1}
Let $ \phi $ be an analytic self-map of $ \mathbb{D} $ and $ \psi_k\in\mathcal{H}(\mathbb{D}) $ (where $ k=1, 2, \ldots, n $) be not identically zero such that $ L_{n,\psi, \phi}=\sum_{k=1}^{n}c_kD_{k, \psi_k, \phi} $ is bounded on $ \mathcal{A}^2_{\alpha}(\mathbb{D}) $. If  $ L_{n,\psi, \phi} $ is complex symmetric on $ \mathcal{A}^2_{\alpha}(\mathbb{D}) $ with conjugation $ C_{\mu, \eta} $, then $ ker(L_{n,\psi, \phi})=ker(L^*_{n,\psi, \phi})=\mathbb{P}_{m-1}(\mathbb{C}) $, where $ m=min\{k:c_k\neq0\}	$.
\end{thm}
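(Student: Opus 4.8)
The plan is to compute $\ker(L_{n,\psi,\phi})$ directly and then transport the answer to $\ker(L^*_{n,\psi,\phi})$ using the $C_{\mu,\eta}$-symmetry. Put $S=\{k:c_k\neq 0\}$, so $m=\min S$ and $L_{n,\psi,\phi}f=\sum_{k\in S}c_k\psi_k\cdot(f^{(k)}\circ\phi)$. One inclusion is immediate and uses only the definition of $m$: if $p$ is a polynomial with $\deg p\le m-1$, then $p^{(k)}\equiv 0$ for every $k\ge m$, hence $L_{n,\psi,\phi}p=0$; thus $\mathbb{P}_{m-1}(\mathbb{C})\subseteq\ker(L_{n,\psi,\phi})$. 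For the transfer, recall that a conjugation $\mathcal{C}$ with $T=\mathcal{C}T^*\mathcal{C}$ also satisfies $T^*=\mathcal{C}T\mathcal{C}$, so $T^*f=0\iff T(\mathcal{C}f)=0$, and therefore $\ker(T^*)=\mathcal{C}(\ker T)$. Since $C_{\mu,\eta}$ sends $\sum_{j=0}^{m-1}\beta_jz^j$ to $\mu\sum_{j=0}^{m-1}\overline{\beta_j}\,\eta^{\,j}z^{j}$, it restricts to a bijection of $\mathbb{P}_{m-1}(\mathbb{C})$. Hence it is enough to prove $\ker(L_{n,\psi,\phi})=\mathbb{P}_{m-1}(\mathbb{C})$; the equality $\ker(L^*_{n,\psi,\phi})=C_{\mu,\eta}\big(\mathbb{P}_{m-1}(\mathbb{C})\big)=\mathbb{P}_{m-1}(\mathbb{C})$ then follows automatically.

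It remains to prove $\ker(L_{n,\psi,\phi})\subseteq\mathbb{P}_{m-1}(\mathbb{C})$. By Theorem~\ref{thm-2.1}, $C_{\mu,\eta}$-symmetry forces $\psi_k(z)=a_kz^k/(1-\eta bz)^{k+\alpha+2}$ with $a_k\neq 0$ (as $\psi_k\not\equiv 0$) and $\phi(z)=b+cz/(1-\eta bz)$ for some $b\in\mathbb{D}$, $c\in\mathbb{C}$. Substituting these symbols and dividing by the nonvanishing function $(1-\eta bz)^{-(\alpha+2)}$, the equation $L_{n,\psi,\phi}f=0$ becomes
\begin{align*}
\sum_{k\in S}c_ka_k\left(\frac{z}{1-\eta bz}\right)^{k}f^{(k)}\big(\phi(z)\big)=0,\qquad z\in\mathbb{D}.
\end{align*}
In the generic case $c\neq 0$, $\phi$ is a nonconstant M\"obius map with $\phi(0)=b$ and $\tfrac{z}{1-\eta bz}=\tfrac{\phi(z)-b}{c}$; setting $w=\phi(z)$, which ranges over an open neighbourhood of $b$, the above identity turns into the Euler-type relation $\sum_{k\in S}\frac{c_ka_k}{c^{k}}(w-b)^{k}f^{(k)}(w)\equiv 0$ near $w=b$. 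Expanding $f(w)=\sum_{j\ge 0}\gamma_j(w-b)^j$ and using $(w-b)^k\frac{d^k}{dw^k}(w-b)^j=j(j-1)\cdots(j-k+1)(w-b)^j$, this reads
\begin{align*}
\sum_{j\ge 0}\gamma_j\,R(j)\,(w-b)^j\equiv 0,\qquad R(x):=\sum_{k\in S}\frac{c_ka_k}{c^{k}}\,x(x-1)\cdots(x-k+1),
\end{align*}
so $\gamma_j R(j)=0$ for all $j\ge 0$. Every summand of $R$ is divisible by $x(x-1)\cdots(x-m+1)$, whence $R(x)=x(x-1)\cdots(x-m+1)\,Q(x)$ with $Q(m)=c_ma_m/c^{m}\neq 0$; if one shows in addition that $Q$ has no integer zero $\ge m$, then $\gamma_j=0$ for every $j\ge m$, i.e.\ $f\in\mathbb{P}_{m-1}(\mathbb{C})$, as required.

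The main obstacle is precisely this last point: ruling out spurious non-negative integer roots of the auxiliary polynomial $Q$ beyond the evident "indicial" roots $0,1,\dots,m-1$, so that $\gamma_jR(j)=0$ genuinely forces $\gamma_j=0$ for all $j\ge m$; a secondary point is the degenerate case $c=\phi'(0)=0$ (where $\phi\equiv b$ and the substitution collapses), which has to be analysed on its own. Everything else is bookkeeping with the explicit symbols furnished by Theorem~\ref{thm-2.1} and the elementary action of $C_{\mu,\eta}$ on polynomials.
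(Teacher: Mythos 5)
Your reduction is sound up to the point you flag, and the obstacle you isolate is exactly the right one. The transfer $\ker(L^*_{n,\psi,\phi})=C_{\mu,\eta}(\ker L_{n,\psi,\phi})$ and the inclusion $\mathbb{P}_{m-1}(\mathbb{C})\subseteq\ker(L_{n,\psi,\phi})$ are correct and match the paper. For the reverse inclusion the paper takes a different (and illegitimate) shortcut: from $\sum_{k=m}^{n}c_k\psi_k(z)f^{(k)}(\phi(z))\equiv 0$ it immediately concludes $f^{(k)}(\phi(z))\equiv 0$ for every $k=m,\dots,n$, i.e.\ it deduces termwise vanishing from the vanishing of a sum. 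Your indicial polynomial $R(x)=x(x-1)\cdots(x-m+1)Q(x)$ makes precise why that deduction needs justification: one must rule out integer roots of $Q$ at or above $m$.

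That cannot be done, because the statement is false as soon as more than one $c_k$ is nonzero. Take $n=2$, $c_1=c_2=1$, $b=0$, $\eta=\mu=1$, $a_1=-2$, $a_2=1$, $c=1/2$, i.e.\ $\psi_1(z)=-2z$, $\psi_2(z)=z^2$, $\phi(z)=z/2$. These symbols have exactly the form required in Theorem \ref{thm-2.1}, so $L_{2,\psi,\phi}f(z)=-2zf'(z/2)+z^2f''(z/2)$ is $C_{\mu,\eta}$-symmetric; moreover $L_{2,\psi,\phi}z^{j}=2^{\,2-j}j(j-2)z^{j}$, so the operator is diagonal with eigenvalues tending to $0$, hence bounded (even compact). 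Here $m=1$ and $\mathbb{P}_{0}(\mathbb{C})=\mathbb{C}$, yet $L_{2,\psi,\phi}z^{2}=-2z\cdot z+z^{2}\cdot 2=0$, so $z^{2}\in\ker(L_{2,\psi,\phi})\setminus\mathbb{P}_{0}(\mathbb{C})$. (In your notation, $R(x)=-x+x(x-1)=x(x-2)$, whose spurious root $x=2$ is precisely what produces this kernel vector.) The degenerate case $c=\phi'(0)=0$ that you set aside also yields counterexamples: then $\phi\equiv b$ and $\ker(L_{n,\psi,\phi})=\{f:f^{(k)}(b)=0\text{ for all }k\text{ with }c_k\neq 0\}$, which strictly contains $\mathbb{P}_{m-1}(\mathbb{C})$ whenever $b\neq 0$ or some index is skipped; note the paper asserts $c=\phi'(0)\neq 0$ without justification. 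So the correct conclusion under the theorem's hypotheses is only $\ker(L_{n,\psi,\phi})=\{f:\gamma_jR(j)=0\ \forall j\}$, which equals $\mathbb{P}_{m-1}(\mathbb{C})$ exactly when $Q$ has no integer root $\geq m$ (e.g.\ automatically when only one $c_k$ is nonzero, which is the case of Corollary \ref{cor-3.1}).
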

\begin{proof}[\bf Proof of Theorem \ref{thm-3.1}]
	Suppose $ L_{n,\psi, \phi} $ is complex symmetric on $ \mathcal{A}^2_{\alpha}(\mathbb{D}) $ with conjugation $ C_{\mu, \eta} $, it follows from Theorem \ref{thm-3.1} that
	\begin{align*}
		\psi_k(z)=\frac{a_kz^k}{(1-\eta bz)^{k+\alpha+2}}\; \mbox{for}\; k=1, 2, \ldots,n\; \mbox{and}\; \phi(z)=b+\frac{cz}{1-\eta bz}\; \mbox{for all}\;  z\in\mathbb{D}
	\end{align*}
where $ a_k={\psi^{(k)}(0)}/{k!}\neq 0 $ (where $ k=1, 2, \ldots, n $), $ b=\phi(0) $ and $ c=\phi^{\prime}(0)\neq 0 $. Let $ f\in\ ker(L_{n,\psi, \phi}) $. Then it is easy to see that
\begin{align*}
&L_{n,\psi, \phi}f(z)\equiv0\; \mbox{on}\; \mathbb{D}\\\Rightarrow&\sum_{k=1}^{n}c_kD_{k, \psi_k, \phi}f(z)\equiv0\; \mbox{on}\; \mathbb{D}\\\Rightarrow&\sum_{k=m}^{n}c_k\psi_k(z)f^{(k)}(\phi(z))\equiv0\; \mbox{on}\; \mathbb{D}\\\Rightarrow&f^{(k)}(\phi(z))\equiv0\; \mbox{on}\; \mathbb{D}\; \mbox{for all}\; k=m, m+1, \ldots, n.	
\end{align*}
Since $ \phi(\mathbb{D}) $ is open, by Identity Theorem for complex valued functions, we have
\begin{align*}
	&f^{(k)}(z)\equiv0\; \mbox{on}\; \mathbb{D}\; \mbox{for all}\; k=m, m+1, \ldots, n\\\Rightarrow&f(z)=a_0+a_1z+a_2z^2+ \ldots+ a_{k-1}z^{k-1}.
\end{align*} 
for some  $ a_0,a_1,a_2, \ldots, a_{k-1}\in\mathbb{C} $. Thus it is clear that
\begin{align}\label{Eq-3.1}
ker(L_{n,\psi, \phi})\subseteq\mathbb{P}_{k-1}(\mathbb{C}).	
\end{align}
Let $ f\in\mathbb{P}_{m-1}(\mathbb{C}) $. Clearly, $ f $ is a polynomial of degree $ \leq(m-1) $. A simple computation shows that
\begin{align*}
L_{n,\psi, \phi}f(z)&=\sum_{k=1}^{n}c_kD_{k, \psi_k, \phi}f(z)\\&=\sum_{k=m}^{n}c_k\psi_k(z)f^{(k)}(\phi(z))\nonumber\\&=\sum_{k=m}^{n}c_k\psi_k(z)f^{(k)}(z)\; \mbox{on}\; \mathbb{D}\nonumber\\&\equiv 0\; \mbox{on}\; \mathbb{D}\nonumber.	
\end{align*}
This implies that 
\begin{align}\label{Eq-3.2}
	\mathbb{P}_{m-1}(\mathbb{C})\subseteq ker(L_{n,\psi, \phi}).
\end{align}
Consequently, from \eqref{Eq-3.1} and \eqref{Eq-3.2}, we have $ ker(L_{n,\psi, \phi})=\mathbb{P}_{m-1}(\mathbb{C}) $. Let  $ f\in\ ker(L^*_{n,\psi, \phi}) $. This shows that $ L^*_{n,\psi, \phi}f\equiv 0 $. Moreover, $ L_{n,\psi, \phi} $ is complex symmetric on $ \mathcal{A}^2_{\alpha}(\mathbb{D}) $ with conjugation $ C_{\mu, \eta} $ we must have $ L_{n,\psi, \phi}C_{\mu, \eta}f=C_{\mu, \eta}L^*_{n,\psi, \phi}f\equiv 0 $. This implies that 
\begin{align*}
C_{\mu, \eta}f\in\ker( L_{n,\psi, \phi})=	\mathbb{P}_{m-1}(\mathbb{C})	
\end{align*}
Clearly, $ f\in\ker( L_{n,\psi, \phi}) $. Therefore, it is easy to see that 
\begin{align}\label{Eq-3.3}
	ker( L^*_{n,\psi, \phi})\subseteq\mathbb{P}_{m-1}(\mathbb{C}). 
\end{align}
Let $ g\in\mathbb{P}_{m-1}(\mathbb{C}) $. Thus, we have
\begin{align*}
L^*_{n,\psi, \phi}g=C_{\mu, \eta}L_{n,\psi, \phi}C_{\mu, \eta}g\equiv 0	
\end{align*}
which  implies that $ g\in\ker(L^*_{n,\psi, \phi}) $. Clearly, we have
\begin{align}\label{Eq-3.4}
\mathbb{P}_{m-1}(\mathbb{C})\subseteq ker(L^*_{n,\psi, \phi}).	
\end{align} 
Consequently, from \eqref{Eq-3.3} and \eqref{Eq-3.4}, we obtain $ ker(L^*_{n,\psi, \phi})=\mathbb{P}_{m-1}(\mathbb{C}) $. Thus, we have
\begin{align*}
ker(L_{n,\psi, \phi})=ker(L^*_{n,\psi, \phi})=	\mathbb{P}_{m-1}(\mathbb{C}).
\end{align*}
This completes the proof.
\end{proof}
As a consequence of Theorem \ref{thm-2.1}, we obtain the following result for generalized composition-differentiation operators $ D_{n,\psi, \phi} $ for the space $ \mathcal{A}^2_{\alpha}(\mathbb{D}) $. The following result implies that the kernel of the adjoint of a nonzero $ C_{\mu, \eta} $-symmetric generalized weighted composition-differentiation operator $ D_{n,\psi, \phi} $ on $ \mathcal{A}^2_{\alpha}(\mathbb{D}) $ is $ \mathbb{P}_{n-1}(\mathbb{C}) $. 
 \begin{cor}\label{cor-3.1}
Let $ \phi $ be an analytic self-map of $ \mathbb{D} $ and $ \psi\in\mathcal{H}(\mathbb{D}) $ be not identically zero such that $ D_{n,\psi, \phi} $ is bounded on $ \mathcal{A}^2_{\alpha}(\mathbb{D}) $. If  $ D_{n,\psi, \phi} $ is complex symmetric on $ \mathcal{A}^2_{\alpha}(\mathbb{D}) $ with conjugation $ C_{\mu, \eta} $, then $ ker(D_{n,\psi, \phi})=ker(D^*_{n,\psi, \phi})=\mathbb{P}_{n-1}(\mathbb{C}) $.
 \end{cor}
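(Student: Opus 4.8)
The plan is to obtain Corollary \ref{cor-3.1} as the special case of Theorem \ref{thm-3.1} in which $D_{n,\psi,\phi}$ is recognized as the member of the family $L_{n,\psi,\phi}$ corresponding to $c_n=1$ and $c_1=c_2=\cdots=c_{n-1}=0$ (take $\psi_n=\psi$, and choose the remaining $\psi_k$ to be any fixed nonzero analytic functions, which is harmless since they are multiplied by $c_k=0$). With this choice of coefficients one has $m=\min\{k:c_k\neq 0\}=n$, so that $\mathbb{P}_{m-1}(\mathbb{C})=\mathbb{P}_{n-1}(\mathbb{C})$. The hypotheses of the corollary --- $\phi$ an analytic self-map of $\mathbb{D}$, $\psi\in\mathcal{H}(\mathbb{D})$ not identically zero, and $D_{n,\psi,\phi}$ bounded and $C_{\mu,\eta}$-symmetric on $\mathcal{A}^2_{\alpha}(\mathbb{D})$ --- are exactly those of Theorem \ref{thm-3.1} under this specialization, so Theorem \ref{thm-3.1} applies verbatim and yields $ker(D_{n,\psi,\phi})=ker(D^*_{n,\psi,\phi})=\mathbb{P}_{n-1}(\mathbb{C})$.

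For completeness I would also record the direct argument, which simply re-runs the proof of Theorem \ref{thm-3.1} in the one-term case. By Theorem \ref{thm-2.1}, complex symmetry forces $\psi(z)=a z^{n}/(1-\eta b z)^{n+\alpha+2}$ with $a\neq 0$ and $\phi(z)=b+cz/(1-\eta bz)$, so $\psi$ vanishes only at $0$ and $\phi(\mathbb{D})$ is open. If $f\in ker(D_{n,\psi,\phi})$ then $\psi(z)f^{(n)}(\phi(z))\equiv 0$ on $\mathbb{D}$; dividing by $\psi$ on $\mathbb{D}\setminus\{0\}$ and invoking the Identity Theorem gives $f^{(n)}\circ\phi\equiv 0$ on $\mathbb{D}$, hence $f^{(n)}\equiv 0$ on the open set $\phi(\mathbb{D})$ and therefore on all of $\mathbb{D}$, so $f\in\mathbb{P}_{n-1}(\mathbb{C})$. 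Conversely any polynomial of degree $\leq n-1$ has vanishing $n$-th derivative and is thus annihilated by $D_{n,\psi,\phi}$, giving $ker(D_{n,\psi,\phi})=\mathbb{P}_{n-1}(\mathbb{C})$.

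To pass to the adjoint I would use that $C_{\mu,\eta}$ is an involution satisfying $D_{n,\psi,\phi}=C_{\mu,\eta}D^*_{n,\psi,\phi}C_{\mu,\eta}$: for $f\in ker(D^*_{n,\psi,\phi})$ one gets $C_{\mu,\eta}f\in ker(D_{n,\psi,\phi})=\mathbb{P}_{n-1}(\mathbb{C})$, and since $C_{\mu,\eta}g=\mu\,\overline{g(\overline{\eta z})}$ maps $\mathbb{P}_{n-1}(\mathbb{C})$ onto itself, applying $C_{\mu,\eta}$ once more yields $f\in\mathbb{P}_{n-1}(\mathbb{C})$; the reverse inclusion follows identically. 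Hence $ker(D^*_{n,\psi,\phi})=\mathbb{P}_{n-1}(\mathbb{C})$, completing the proof. No genuinely hard step is involved: the only points requiring care are the bookkeeping that the minimal index $m$ collapses to $n$ under the specialization, and the observation that the conjugation $C_{\mu,\eta}$ preserves the degree of a polynomial; everything else is an instance of Theorem \ref{thm-3.1}.
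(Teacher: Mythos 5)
Your proposal is correct and, in its ``direct argument'' part, essentially reproduces the paper's own proof: the paper likewise derives the forms of $\psi$ and $\phi$ from the complex-symmetry characterization, uses the Identity Theorem on the open set $\phi(\mathbb{D})$ to get $ker(D_{n,\psi,\phi})=\mathbb{P}_{n-1}(\mathbb{C})$, and transfers this to the adjoint via $D_{n,\psi,\phi}=C_{\mu,\eta}D^*_{n,\psi,\phi}C_{\mu,\eta}$ together with the fact that $C_{\mu,\eta}$ preserves $\mathbb{P}_{n-1}(\mathbb{C})$. Your primary route---specializing Theorem \ref{thm-3.1} with $c_n=1$, $c_1=\cdots=c_{n-1}=0$ so that $m=n$---is a valid shortcut that the paper itself gestures at before writing out the direct proof, and your bookkeeping of that specialization is sound.
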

The following result implies that the kernel of the adjoint of a nonzero weighted differentiation-composition operator on $ \mathcal{A}^2_{\alpha}(\mathbb{D}) $ is the whole complex plane $ \mathbb{C} $. 
\begin{cor}\label{cor-3.2}
Let $ \phi $ be an analytic self-map of $ \mathbb{D} $ and $ \psi\in\mathcal{H}(\mathbb{D}) $ be not identically zero such that $ D_{1,\psi, \phi}=D_{\psi, \phi} $ is bounded on $ \mathcal{A}^2_{\alpha}(\mathbb{D}) $. If  $ D_{\psi, \phi} $ is complex symmetric on $ \mathcal{A}^2_{\alpha}(\mathbb{D}) $ with conjugation $ C_{\mu, \eta} $, then $ ker(D_{\psi, \phi})=ker(D^*_{\psi, \phi})=\mathbb{C} $.
\end{cor}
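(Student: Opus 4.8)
The plan is to obtain this statement as a direct specialization of Theorem \ref{thm-3.1} to the case $n=1$. First I would write $D_{\psi,\phi}=D_{1,\psi,\phi}$ in the form $L_{1,\psi,\phi}=\sum_{k=1}^{1}c_kD_{k,\psi_k,\phi}$ with the single coefficient $c_1=1$ and $\psi_1=\psi$, and check that the hypotheses of Theorem \ref{thm-3.1} are met: $\phi$ is an analytic self-map of $\mathbb{D}$, $\psi$ is not identically zero, $L_{1,\psi,\phi}$ is bounded on $\mathcal{A}^2_\alpha(\mathbb{D})$, and it is complex symmetric with conjugation $C_{\mu,\eta}$. Theorem \ref{thm-3.1} then yields $\ker(D_{\psi,\phi})=\ker(D^*_{\psi,\phi})=\mathbb{P}_{m-1}(\mathbb{C})$, where $m=\min\{k:c_k\neq 0\}$.

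Next I would simply observe that the only coefficient present is $c_1=1\neq 0$, so $m=1$ and hence $\mathbb{P}_{m-1}(\mathbb{C})=\mathbb{P}_{0}(\mathbb{C})$, which by convention is the space of constant functions, that is, $\mathbb{C}$. This gives $\ker(D_{\psi,\phi})=\ker(D^*_{\psi,\phi})=\mathbb{C}$, completing the proof.

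For a self-contained alternative one could instead invoke Corollary \ref{cor-2.2} to write $\psi(z)=az/(1-\eta bz)^{\alpha+3}$ and $\phi(z)=b+cz/(1-\eta bz)$, note that $D_{\psi,\phi}f\equiv 0$ forces $f'(\phi(z))\equiv 0$ on $\mathbb{D}$ since $\psi\not\equiv 0$, then apply the identity theorem on the open set $\phi(\mathbb{D})$ to conclude $f'\equiv 0$ on $\mathbb{D}$, so $f$ is constant; the reverse inclusion $\mathbb{C}\subseteq\ker(D_{\psi,\phi})$ is immediate because $D_{\psi,\phi}$ differentiates first. The corresponding statement for $D^*_{\psi,\phi}$ follows from complex symmetry, using $L_{\psi,\phi}C_{\mu,\eta}f=C_{\mu,\eta}L^*_{\psi,\phi}f$ and the fact that $C_{\mu,\eta}$ maps the constants onto the constants.

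There is essentially no obstacle here; the result is a routine corollary. The only points requiring a moment's care are correctly identifying $m=1$ and recalling the convention that $\mathbb{P}_0(\mathbb{C})$ denotes the constants $\mathbb{C}$, so that the conclusion of Theorem \ref{thm-3.1} collapses to the whole complex plane.
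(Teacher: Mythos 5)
Your proposal is correct and matches the paper's (implicit) derivation: Corollary \ref{cor-3.2} is obtained exactly by specializing the general kernel result (Theorem \ref{thm-3.1}, equivalently Corollary \ref{cor-3.1} with $n=1$) so that $m=1$ and $\mathbb{P}_{0}(\mathbb{C})$ is the space of constants, identified with $\mathbb{C}$. Your self-contained alternative is just the $n=1$ instance of the paper's proof of Corollary \ref{cor-3.1}, so nothing further is needed.
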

 \begin{proof}[\bf Proof of Corollary \ref{cor-3.1}]
 	Suppose that $ D_{n,\psi, \phi} $ is complex symmetric on $ \mathcal{A}^2_{\alpha}(\mathbb{D}) $ with conjugation $ C_{\mu, \eta} $. It follows from \cite[Theorem-2.1]{Allu-Hal-Pal-2023} that
 	\begin{align*}
 		\psi(z)=\frac{az^n}{(1-\eta bz)^{n+\alpha+2}}\; \mbox{and}\; \phi(z)=b+\frac{cz}{1-\eta bz}	
 	\end{align*}
 	where $ a={\psi^{(n)}(0)}/{n!}\neq 0 $, $ b=\phi(0) $ and $ c=\phi^{\prime}(0)\neq 0 $. Let $ f\in\ ker(D_{n,\psi, \phi}) $. Then 
 	\begin{align*}
 		D_{n,\psi, \phi}(f(z))\equiv 0\; \mbox{on}\; \mathbb{D}.\\
 		\Rightarrow\psi(z)f^{(n)}(\phi(z))\equiv 0\; \mbox{on}\; \mathbb{D}.\\\Rightarrow f^{(n)}(\phi(z))\equiv 0\; \mbox{on}\; \mathbb{D}.
 	\end{align*}
 	Since $ \phi(\mathbb{D}) $ is open, by the Identity Theorem we have 
 	\begin{align*}
 		f^{(n)}(z)&\equiv 0\; \mbox{on}\; \mathbb{D}.\\\Rightarrow f(z)&=a_0+a_1z+a_2z^2+ \ldots+ a_{n-1}z^{n-1}
 	\end{align*}
 	for some $ a_0, a_1, a_2, \ldots, a_{n-1}\in\mathbb{C} $. Clearly, it follows that
 	\begin{align}\label{Eq-3.5}
 		ker(D_{n,\psi, \phi})\subseteq\mathbb{P}_{n-1}(\mathbb{C}) 
 	\end{align}
 	Let $ f\in\mathbb{P}_{n-1}(\mathbb{C}) $. It is easy to see that $ f $ is a polynomial of degree $ \leq(n-1) $. An easy computation yields that
 	\begin{align*}
 		D_{n,\psi, \phi}(f(z))&=\psi(z)f^{(n)}(\phi(z))\\&\equiv\psi(z)f^{(n)}(z)\; \mbox{on}\; \mathbb{D}\\&\equiv 0\; \mbox{on}\; \mathbb{D}
 	\end{align*}
 	which turns out that $ f\in\ker(D_{n,\psi, \phi}) $. Thus, we have
 	\begin{align}\label{Eq-3.6}
 		\mathbb{P}_{n-1}(\mathbb{C})\subseteq\ker(D_{n,\psi, \phi})
 	\end{align}
 	consequently, in view of \eqref{Eq-3.5} and \eqref{Eq-3.6}, we obtain $ ker(D_{n,\psi, \phi})=\mathbb{P}_{n-1}(\mathbb{C}) $. Furthermore, for $ f\in\ ker(D^*_{n,\psi, \phi}) $, we have $	D^*_{n,\psi, \phi}(f)=0 $. Since $ D_{n,\psi, \phi} $ is complex symmetric on $ \mathcal{A}^2_{\alpha}(\mathbb{D}) $ with conjugation $ C_{\mu, \eta} $, a straightforward computation shows that
 	\begin{align*}
 		&D_{n,\psi, \phi}C_{\mu, \eta}f=C_{\mu, \eta}D^*_{n,\psi, \phi}f=0\\\Rightarrow &C_{\mu, \eta}f\in\ker(D_{n,\psi, \phi})=\mathbb{P}_{n-1}(\mathbb{C})\\\Rightarrow&\mu\overline{f(\overline{\eta z})}\in\mathbb{P}_{n-1}(\mathbb{C})\\\Rightarrow&f\in\mathbb{P}_{n-1}(\mathbb{C})	
 	\end{align*}
 Hence,
 	\begin{align}\label{Eq-3.7}
 		ker(D^*_{n,\psi, \phi})\subseteq\mathbb{P}_{n-1}(\mathbb{C}). 	
 	\end{align} 
 	Let $ g\in\mathbb{P}_{n-1}(\mathbb{C}) $. Then, we have $	D^*_{n,\psi, \phi}g=C_{\mu, \eta}D_{n,\psi, \phi}C_{\mu, \eta}g=0 $ which implies that $ g\in\ker(D^*_{n,\psi, \phi}) $. Hence,
 	\begin{align}\label{Eq-3.8}
 		\mathbb{P}_{n-1}(\mathbb{C})\subseteq ker(D^*_{n,\psi, \phi}). 	
 	\end{align}
 	Consequently, from \eqref{Eq-3.7} and \eqref{Eq-3.8}, we obtain $ ker(D^*_{n,\psi, \phi})=\mathbb{P}_{n-1}(\mathbb{C}) $. Therefore, we conclude that
 	\begin{align*}
 		ker(D_{n,\psi, \phi})=ker(D^*_{n,\psi, \phi})=\mathbb{P}_{n-1}(\mathbb{C}).	
 	\end{align*}
 	This completes the proof.
 \end{proof}
\section{Spectrum of generalized weighted composition-differential operators on space $ \mathcal{A}^2_{\alpha}(\mathbb{D}) $ }
In this section, we consider spectral properties of weighted composition-differentiation operators. The study of spectral properties of composition-differentiation operators is focused on understanding the behavior and characteristics of these operators in relation to their eigenvalues and eigenvectors. These operators combine the operations of composition and differentiation on functions, resulting in a new transformed function.\vspace{2mm}

Investigating the spectral properties of composition-differentiation operators is significant for several reasons. Firstly, it provides insights into the structure and behavior of these operators, shedding light on their fundamental properties and how they affect the functions they operate on. This knowledge can be applied in various areas of mathematics and mathematical analysis. Secondly, understanding the spectral properties of composition-differentiation operators enables us to analyze and solve differential equations involving these operators. Differential equations play a crucial role in many scientific and engineering disciplines, and by studying the spectral properties, we can gain a deeper understanding of the solutions to such equations.\vspace{2mm}

Moreover, the study of spectral properties also has connections to harmonic analysis, functional analysis, and operator theory. It allows us to explore the connections between composition-differentiation operators and other important mathematical concepts, leading to further advancements and applications in these fields. Overall, investigating the spectral properties of composition-differentiation operators provides valuable insights into their behavior, facilitates the analysis of differential equations, and establishes connections with other branches of mathematics, contributing to the broader understanding and application of these operators in various mathematical contexts.\vspace{1.2mm}

Let $ \mathcal{B}(\mathcal{H}) $ be the algebra of all bounded linear operators on a separable complex Hilbert space $ \mathcal{H} $. A conjugation on $ \mathcal{H} $ is an antilinear operator $ \mathcal{C} : \mathcal{H} \to \mathcal{H}$ which satisfies $ \langle \mathcal{C}x, \mathcal{C}y \rangle_{\mathcal{H}}=\langle y, x \rangle_{\mathcal{H}} $ for all $ x, y\in\mathcal{H} $ and $ \mathcal{C}^2=I_{\mathcal{H}} $, where $ I_{\mathcal{H}} $ is the identity operator on $ {\mathcal{H}} $. An operator $ T\in\mathcal{B}({\mathcal{H}}) $ is said to be complex symmetric if there exists a conjugation $ \mathcal{C} $ on $ \mathcal{H} $ such that $ T=\mathcal{C}T^*\mathcal{C} $. In this case, we say that $ T $ is complex symmetric with conjugation $ \mathcal{C} $. The exploration of complex symmetric operators was pioneered by Garcia, Putinar, and Wogen \cite{Garcia-Putinar-TAMS-2006,Garcia-Putinar-TAMS-2007,Garcia-Putinar-TAMS-2008,Garcia-Wogen-JFA-2009,Garcia-Wogen-TAMS-2010}. The class of complex symmetric operators  comprises a diverse set of operators, including normal operators, binormal operators, operators of algebraic degree two, Hankel operators, compressed Toeplitz operators, and the Volterra integration operator. In recent times, extensive research efforts have been directed towards investigating complex symmetric composition operators that operate on classical Hilbert spaces of analytic functions (see \cite{Bourdon-Noor-JMAA-2015,Garcia-Hammond-OPAP-2013,Jun-Kim-Ko-Lee-JFA-2014,Lim-Khoi-JMAA-2018,Noor-Severiano-PAMS-2020,Yao-JMAA-2017} and references therein).\vspace{1.2mm}

If $ \phi(0)=0 $, the following result characterizes the spectrum of the compact $ \mathcal{C}_1 $-symmetric weighted composition-differentiation operator $ D_{n, \psi, \phi}$.
\begin{thm}\cite{Han-Wang-CAOT-2021}
	Let $ n\in\mathbb{N}^{+} $. Let $ \psi\in\mathcal{H}(\mathbb{D}) $ be not identically zero and $ \phi $ be a non-constant analytic self-map of $ \mathbb{D} $ such that $ D_{n, \psi, \phi}$ is compact on $ \mathcal{A}^2 $. Suppose that $ \phi(0)=0 $ and $ D_{n, \psi, \phi}$ is $ \mathcal{C}_1 $-symmetric, then 
	\begin{align*}
		\sigma(D_{n, \psi, \phi})=\{0\}\cup\{b_n m(m-1)\cdots(m-(n-1))a_1^{m-n}/n! : m=n, n+1, \ldots\}.
	\end{align*}
Moreover, $ f_m(z)=z^m $ is an eigenvector of $ D_{n, \psi, \phi}$  with respect to the eigenvalue $ b_n m(m-1)\cdots(m-(n-1))a_1^{m-n}/n! $ for each $ m\in\mathbb{N}^{+} $ with $ m\geq n $.
\end{thm}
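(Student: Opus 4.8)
The plan is to show that the hypotheses force $D_{n,\psi,\phi}$ to be a diagonal operator in the monomial basis, and then to read off the spectrum from compactness together with the elementary spectral theory of diagonal operators.

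First I would pin down the symbols. Since $D_{n,\psi,\phi}$ is $\mathcal C_1$-symmetric on $\mathcal A^2=\mathcal A^2_0(\mathbb D)$ — the conjugation $\mathcal C_1$ being $\mathcal C_{\mu,\eta}$ with $\mu=\eta=1$ — the characterization of complex symmetric weighted composition-differentiation operators, namely Theorem \ref{thm-2.1} specialized to a single operator $D_{n,\psi,\phi}$ (equivalently \cite[Theorem 2.1]{Allu-Hal-Pal-2023}), gives $\psi(z)=a z^{n}/(1-bz)^{n+2}$ and $\phi(z)=b+cz/(1-bz)$ with $b=\phi(0)$. Imposing $\phi(0)=0$ forces $b=0$, so that $\psi(z)=a z^{n}$ and $\phi(z)=a_1 z$, where $a=\psi^{(n)}(0)/n!$ and $a_1=\phi'(0)$; here $a\neq 0$ because $\psi\not\equiv 0$, and $a_1\neq 0$ because $\phi$ is non-constant. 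Writing $b_n:=\psi^{(n)}(0)=a\,n!$, this is exactly the reduced form needed.

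Next I would diagonalize $D_{n,\psi,\phi}$. For $f_m(z)=z^m$ a direct computation gives
\[
D_{n,\psi,\phi}f_m(z)=\psi(z)f_m^{(n)}(\phi(z))=a\,z^{n}\cdot\tfrac{m!}{(m-n)!}\,(a_1 z)^{m-n}=\lambda_m z^m ,
\]
where $\lambda_m=0$ for $0\le m\le n-1$ (since $(z^m)^{(n)}\equiv 0$) and, for $m\ge n$,
\[
\lambda_m=a\,\tfrac{m!}{(m-n)!}\,a_1^{\,m-n}=\tfrac{b_n}{n!}\,m(m-1)\cdots(m-(n-1))\,a_1^{\,m-n}.
\]
This already establishes the eigenvector claim. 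Moreover $\lambda_m\neq0$ for every $m\ge n$ (as $a\neq0$ and $a_1\neq0$), so the numbers $\{\lambda_m:m\ge n\}$ are precisely the nonzero eigenvalues of $D_{n,\psi,\phi}$, while $z^0,\dots,z^{n-1}$ span an $n$-dimensional piece of $\ker D_{n,\psi,\phi}$. Since the normalized monomials $\gamma_m=\sqrt{m+1}\,z^m$ ($m\ge0$) form an orthonormal basis of $\mathcal A^2$ with $D_{n,\psi,\phi}\gamma_m=\lambda_m\gamma_m$, the operator is diagonal with diagonal sequence $\{\lambda_m\}_{m\ge0}$. Finally, because $D_{n,\psi,\phi}$ is compact its diagonal entries satisfy $\lambda_m\to0$ (equivalently $|a_1|<1$), and the spectrum of a diagonal operator is the closure of its set of diagonal entries; as $\lambda_m\to0$ and $0$ already appears among the $\lambda_m$, this closure equals $\{0\}\cup\{\lambda_m:m\ge n\}$, which is the asserted formula.

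I do not expect a serious obstacle here: once the symbol characterization is invoked, the operator is literally diagonal and the conclusion is textbook spectral theory of compact operators. The only points demanding care are the bookkeeping of the three hypotheses — $\psi\not\equiv0$ (to get $a\neq0$), $\phi$ non-constant (to get $a_1\neq0$, hence that every listed $\lambda_m$ is a genuine nonzero eigenvalue and the set is not artificially inflated), and compactness (to get $\lambda_m\to0$, so that the spectrum is exactly the closure of the eigenvalue set with $0$ adjoined) — and checking that the $\mathcal C_1$-symmetry characterization indeed applies in the stated form on $\mathcal A^2_0(\mathbb D)$. As an alternative to quoting the full symbol characterization, one could instead observe via Theorem \ref{thm-2.2} that $D_{n,\psi,\phi}$ is compact and normal, hence diagonalizable by an orthonormal eigenbasis, and then identify its eigenpairs by the computation above.
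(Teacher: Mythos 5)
Your argument is correct, but note that the paper does not actually prove this statement: it is quoted verbatim from \cite{Han-Wang-CAOT-2021} as background, and the only proof in the paper of a result of this type is that of its generalization, Theorem \ref{Th-4.1}. Measured against that proof, your route genuinely differs in the second half. Both arguments begin the same way: invoke the symbol characterization (Theorem \ref{thm-2.1}, or \cite[Theorem 2.1]{Allu-Hal-Pal-2023}) together with $\phi(0)=0$ to reduce to $\psi(z)=az^{n}$, $\phi(z)=a_1z$, and then compute $D_{n,\psi,\phi}z^m=\lambda_m z^m$ to exhibit the eigenvectors. For the converse inclusion, however, the paper's Theorem \ref{Th-4.1} takes an arbitrary eigenpair $(\lambda,f)$ and repeatedly differentiates the eigenvalue equation at the origin, case-splitting on the order of the first nonvanishing derivative of $f$ at $0$ — a long inductive computation. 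You instead observe that the operator is diagonal with respect to the orthonormal basis $\gamma_m=\sqrt{m+1}\,z^m$ of $\mathcal A^2$ and quote the spectral theory of compact (or diagonal) operators: the eigenvalues are exactly the diagonal entries, and the spectrum is their closure, which by $\lambda_m\to0$ adds nothing beyond $\{0\}$. This is shorter, and it also cleanly justifies two points the paper's argument leaves implicit, namely that $0$ belongs to the spectrum and that the nonzero spectrum consists only of eigenvalues. Your bookkeeping of the hypotheses ($\psi\not\equiv0$ gives $a\neq0$, $\phi$ non-constant gives $a_1\neq0$, compactness gives $\lambda_m\to0$, equivalently $|a_1|<1$) is exactly what is needed, and your reading of $b_n$ as $\psi^{(n)}(0)$ makes the stated eigenvalue formula consistent with the computation. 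The alternative you sketch at the end (normality via Theorem \ref{thm-2.2} plus the spectral theorem for compact normal operators) would work equally well.
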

If $ \phi^{\prime}(0)=0 $, the following result characterizes the spectrum of the compact $ \mathcal{C}_1 $-symmetric weighted composition-differentiation operator $ D_{n, \psi, \phi}$.
\begin{thm}\cite{Han-Wang-CAOT-2021}
Let $ n\in\mathbb{N}^{+} $. Let $ \psi\in\mathcal{H}(\mathbb{D}) $ be not identically zero and $ \phi $ be a non-constant analytic self-map of $ \mathbb{D} $ such that $ D_{n, \psi, \phi}$ is bounded on $ \mathcal{A}^2 $. Suppose that $ \phi^{\prime}(0)=0 $ and $ D_{n, \psi, \phi}$ is $ \mathcal{C}_1 $-symmetric, then 
\begin{align*}
\sigma(D_{n, \psi, \phi})=\begin{cases}
\{0\}\cup\{\psi^{(n)}(a_0)\},\; \mbox{when}\; \psi^{(n)}(a_0)\neq 0\\
\{0\},\;\;\;\;\;\;\;\;\;\;\;\;\;\;\;\;\;\;\;\;\; \mbox{when}\; \psi^{(n)}(a_0)= 0.
\end{cases}
\end{align*}
Moreover, if $ \psi^{(n)}(a_0)\neq 0 $, then $ \psi $ is an eigenvector of $ D_{n, \psi, \phi} $ with respect to the eigenvalue $ \psi^{(n)}(a_0) $.
\end{thm}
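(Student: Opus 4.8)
The plan is to use the complex-symmetry classification to collapse $D_{n,\psi,\phi}$ to a rank-one operator and then read off its spectrum. First I would invoke the classification of $\mathcal{C}_1$-symmetric weighted composition–differentiation operators on $\mathcal{A}^2=\mathcal{A}^2_0$ — the specialisation of Theorem~\ref{thm-2.1} to $c_1=\dots=c_{n-1}=0$, $c_n=1$ and $\alpha=0$, with conjugation $\mathcal{C}_1$. That result forces $\phi(z)=\phi(0)+\dfrac{\phi'(0)\,z}{1-\eta\,\phi(0)\,z}$, where $\eta$ is the unimodular constant attached to $\mathcal{C}_1$. Since $\phi'(0)=0$, this collapses to $\phi\equiv a_0$, the constant map with value $a_0=\phi(0)\in\mathbb{D}$. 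Extracting this form from $\mathcal{C}_1$-symmetry — and re-proving the classification should $\mathcal{C}_1$ not be literally one of the conjugations $C_{\mu,\eta}$ of Theorem~\ref{thm-2.1} — is the only step that is not essentially formal, and the hard part will be precisely this reduction, since it is the one place where complex symmetry is genuinely used.

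With $\phi\equiv a_0$ the defining property $\langle f,K^{[n]}_{a_0}\rangle=f^{(n)}(a_0)$ of the $n$-th-derivative reproducing kernel gives, for every $f\in\mathcal{A}^2$,
\begin{align*}
D_{n,\psi,\phi}f(z)=\psi(z)\,f^{(n)}(a_0)=\big\langle f,\,K^{[n]}_{a_0}\big\rangle\,\psi(z),
\end{align*}
so $D_{n,\psi,\phi}$ is the rank-one operator $f\mapsto\langle f,K^{[n]}_{a_0}\rangle\,\psi$ (in particular it is automatically bounded and compact here). Its kernel is the closed hyperplane $\{f\in\mathcal{A}^2:f^{(n)}(a_0)=0\}$, so $0\in\sigma(D_{n,\psi,\phi})$.

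It remains to compute the nonzero part of the spectrum. Put $\lambda_0=\langle\psi,K^{[n]}_{a_0}\rangle=\psi^{(n)}(a_0)$ and $T=D_{n,\psi,\phi}$. Then $T^2=\lambda_0 T$ and $T\psi=\lambda_0\psi$, and for any scalar $\lambda\notin\{0,\lambda_0\}$ one checks directly that
\begin{align*}
(\lambda I-T)^{-1}=\frac{1}{\lambda}\Big(I+\frac{1}{\lambda-\lambda_0}\,T\Big).
\end{align*}
Therefore $\sigma(D_{n,\psi,\phi})=\{0\}\cup\{\lambda_0\}$: this equals $\{0,\psi^{(n)}(a_0)\}$ when $\psi^{(n)}(a_0)\neq0$, in which case $T\psi=\psi^{(n)}(a_0)\,\psi$ exhibits $\psi$ as the asserted eigenvector, while it reduces to $\{0\}$ when $\psi^{(n)}(a_0)=0$. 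This completes the plan; as noted, the substantive step is the reduction of $\phi$ via the complex-symmetry classification, everything afterward being the elementary spectral theory of a rank-one operator.
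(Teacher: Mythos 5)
The paper does not actually prove this statement: it is quoted from \cite{Han-Wang-CAOT-2021} as background for Section 4, so there is no in-paper proof to compare against. Judged on its own, your argument is correct and complete, and it is the natural (and, as far as the source goes, the standard) route. The specialisation of the complex-symmetry classification (the corollary of Theorem \ref{thm-2.1} with $c_1=\dots=c_{n-1}=0$, $c_n=1$, $\alpha=0$ and $\mu=\eta=1$, since $\mathcal{C}_1 f=\overline{f(\overline{z})}=C_{1,1}f$) gives $\phi(z)=\phi(0)+\phi'(0)z/(1-\phi(0)z)$, and $\phi'(0)=0$ collapses this to the constant $a_0=\phi(0)$, so that $D_{n,\psi,\phi}=\langle\,\cdot\,,K^{[n]}_{a_0}\rangle\,\psi$ is rank one; your resolvent identity $(\lambda I-T)^{-1}=\lambda^{-1}\bigl(I+(\lambda-\lambda_0)^{-1}T\bigr)$ for $\lambda\notin\{0,\lambda_0\}$, together with $T\psi=\lambda_0\psi$ and the observation that $\ker T$ is a nontrivial closed hyperplane, yields exactly $\sigma(D_{n,\psi,\phi})=\{0\}\cup\{\psi^{(n)}(a_0)\}$, collapsing to $\{0\}$ when $\psi^{(n)}(a_0)=0$. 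Two small points are worth making explicit. First, your reduction shows that the hypotheses as transcribed are internally inconsistent: $\mathcal{C}_1$-symmetry plus $\phi'(0)=0$ forces $\phi$ to be \emph{constant}, contradicting the stated assumption that $\phi$ is non-constant; that assumption is an artifact of the transcription and your proof rightly ignores it, but you should say so rather than pass over it. Second, you are right that the only genuinely non-formal input is the classification itself; since the statement lives on $\mathcal{A}^2=\mathcal{A}^2_0$ and $\mathcal{C}_1$ is literally $C_{1,1}$, citing the $D_{n,\psi,\phi}$ corollary of Theorem \ref{thm-2.1} suffices and no re-proof is needed.
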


We obtain the following result which characterizes the spectra of a class of composition-differentiation operators of the form $ L_{n, \psi, \phi} $.
\begin{thm}\label{Th-4.1}
Let $ \phi : \mathbb{D}\to \mathbb{D} $ be an analytic self-maps on $ \mathbb{D} $ with $ \phi(0)=0 $ and $ \psi_k\in\mathcal{H}(\mathbb{D}) $ (where $ k=1, 2, \ldots, n $) be not identically zero such that 
\begin{align*}
	L_{n, \psi, \phi}:=\sum_{k=1}^{n}c_kD_{k, \psi_k, \phi}
\end{align*}
is compact on $ \mathcal{A}^2_{\alpha}(\mathbb{D}) $. Suppose that $ L_{n,\psi, \phi} $ is complex symmetric on $ \mathcal{A}^2_{\alpha}(\mathbb{D}) $ with conjugation $ C_{\mu, \eta} $, then
\begin{align*}
	\sigma(L_{n, \psi, \phi})&=\{0\}\bigcup\bigg\{\sum_{k=1}^{m} a_kc_k\frac{m!}{(m-k)}c^{m-k}: m=1, 2,\ldots, n-1\bigg\}\\&\quad\bigcup\bigg\{\sum_{k=1}^{n}a_kc_k\frac{m!}{(m-k)}c^{m-k}: m=n, n+1, n+2, \ldots\bigg\}.
\end{align*}
Moreover, $ z^m $ is an eigenvector of $ L_{n, \psi, \phi} $ w.r.t. the eigenvalue $ \sum_{k=1}^{n}a_kc_k\frac{m!}{(m-k)}c^{m-k} $ for each $ m\in\mathbb{N} $ in case of when $m<n $ while $z^m $ is an eigenvector of $ L_{n, \psi, \phi} $ with respect to the eigenvalue $ \sum_{k=1}^{n}a_kc_k\frac{m!}{(m-k)}c^{m-k} $ for each $ m\in\mathbb{N} $ with $m\geq n $.
\end{thm}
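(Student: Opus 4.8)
The plan is to show that $L_{n,\psi,\phi}$ is diagonal with respect to the orthonormal basis $\{\gamma_m\}_{m\ge 0}$ of $\mathcal{A}^2_{\alpha}(\mathbb{D})$ and then read off the spectrum from the diagonal entries. First I would invoke Theorem \ref{thm-2.1}: since $L_{n,\psi,\phi}$ is $C_{\mu,\eta}$-symmetric and $\phi(0)=0$, we have $b=\phi(0)=0$, and hence $\psi_k(z)=a_kz^k$ for $k=1,\dots,n$ (with $a_k=\psi_k^{(k)}(0)/k!$) and $\phi(z)=cz$ with $c=\phi'(0)$. This turns $L_{n,\psi,\phi}$ into the operator $f\mapsto\sum_{k=1}^{n}c_ka_k\,z^k f^{(k)}(cz)$.

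Next I would evaluate this operator on the monomials $z^m$. Using $f^{(k)}(cz)=\frac{m!}{(m-k)!}c^{m-k}z^{m-k}$ for $k\le m$ (and $0$ for $k>m$), a direct computation gives
\[
	L_{n,\psi,\phi}(z^m)=\Bigg(\sum_{k=1}^{\min\{m,n\}}a_kc_k\frac{m!}{(m-k)!}c^{m-k}\Bigg)z^m=:\lambda_m z^m .
\]
Thus each $\gamma_m$, being a nonzero scalar multiple of $z^m$, is an eigenvector of $L_{n,\psi,\phi}$ with eigenvalue $\lambda_m$, which is precisely the ``moreover'' part of the statement; splitting into the ranges $1\le m\le n-1$ and $m\ge n$ yields exactly the two indexed families appearing in the asserted spectrum, while $\lambda_0=0$ is subsumed in the point $\{0\}$.

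Finally, since $\{\gamma_m\}_{m\ge 0}$ is an orthonormal basis consisting of eigenvectors, $L_{n,\psi,\phi}$ is a diagonal operator, so $\sigma(L_{n,\psi,\phi})=\overline{\{\lambda_m:m\ge 0\}}$: every $\lambda_m$ is an eigenvalue, and if $\lambda\notin\overline{\{\lambda_m\}}$ then $(L_{n,\psi,\phi}-\lambda I)^{-1}$ is the bounded diagonal operator with entries $(\lambda_m-\lambda)^{-1}$. Because $L_{n,\psi,\phi}$ is compact, its nonzero spectral values form a finite set or a sequence with $\lambda_m\to 0$, so passing to the closure contributes nothing beyond the single point $0$, which yields the stated description of $\sigma(L_{n,\psi,\phi})$. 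I do not anticipate a genuine obstacle here: the only delicate points are carrying out the substitution from Theorem \ref{thm-2.1} correctly, observing that $\frac{m!}{(m-k)!}$ vanishes when $k>m$ so that the truncated sum and the full sum over $k=1,\dots,n$ agree when $m\ge n$, and citing the standard fact that the spectrum of a compact diagonal operator equals the closure of its set of diagonal entries.
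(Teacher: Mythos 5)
Your proposal is correct, and the first half coincides with the paper's argument: both invoke Theorem \ref{thm-2.1} together with $\phi(0)=0$ to reduce to $\psi_k(z)=a_kz^k$, $\phi(z)=cz$, and both then compute $L_{n,\psi,\phi}(z^m)=\lambda_m z^m$ with $\lambda_m=\sum_{k=1}^{\min\{m,n\}}a_kc_k\frac{m!}{(m-k)!}c^{m-k}$, which gives the ``moreover'' part and the forward inclusion. Where you diverge is in showing there are no further spectral values. The paper takes an arbitrary eigenvalue--eigenvector pair $(\lambda,f)$, differentiates the eigenvalue equation $\lambda f(z)=\sum_k c_ka_kz^kf^{(k)}(cz)$ repeatedly, evaluates at $0$, and runs a case analysis on the first nonvanishing Taylor coefficient of $f$ to force $\lambda\in\{\lambda_m\}$; this only constrains eigenvalues, so it implicitly leans on the Riesz theory of compact operators to pass from eigenvalues to the full spectrum. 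You instead observe that $\{\gamma_m\}$ is an orthonormal basis of eigenvectors, so $L_{n,\psi,\phi}$ is a diagonal operator and $\sigma(L_{n,\psi,\phi})=\overline{\{\lambda_m:m\ge0\}}$, with compactness forcing $\lambda_m\to0$ so that the closure contributes only $\{0\}$ (which is already attained at $m=0$). Your route is shorter, avoids the laborious induction on derivatives, and handles the spectrum-versus-point-spectrum issue explicitly; the paper's route is more elementary in that it uses no structure theory of diagonal operators. One small point in your favor: you correctly write $\frac{m!}{(m-k)!}$ and note that the falling factorial vanishes for $k>m$, reconciling the truncated and full sums; the displayed statement's $\frac{m!}{(m-k)}$ is a typo that the paper's own computation also silently corrects.
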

As a consequence of Theorem \ref{Th-4.1}, we obtain the following corollary for differentiation-composition operator for space $ \mathcal{A}^2_{\alpha}(\mathbb{D}) $.
\begin{cor}
Let $ n\in\mathbb{N}^{+} $. Let $ \psi\in\mathcal{H}(\mathbb{D}) $ be not identically zero and $ \phi $ be a non-constant analytic self-map of $ \mathbb{D} $ such that $ D_{n, \psi, \phi}$ is compact on $ \mathcal{A}^2_{\alpha}(\mathbb{D}) $. Suppose that $ \phi(0)=0 $ and $ D_{n, \psi, \phi}$ is complex symmetric on $ \mathcal{A}^2_{\alpha}(\mathbb{D}) $ with conjugation $ C_{\mu, \eta} $, then 
\begin{align*}
	\sigma(D_{n, \psi, \phi})=\{0\}\cup\{a_n\frac{m!}{(m-n)!}c^{m-n} : m=n, n+1, \ldots\}.
\end{align*}
Moreover, $ f_m(z)=z^m $ is an eigenvector of $ D_{n, \psi, \phi}$  with respect to the eigenvalue $ a_n\frac{m!}{(m-n)}c^{m-n} $ for each $ m\in\mathbb{N}^{+} $ with $ m\geq n $.
\end{cor}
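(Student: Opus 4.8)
The plan is to obtain this corollary as the degenerate case of Theorem \ref{Th-4.1} in which the coefficient vector is concentrated at the top index, namely $c_1=c_2=\cdots=c_{n-1}=0$ and $c_n=1$. With this choice the generalized operator $L_{n,\psi,\phi}=\sum_{k=1}^{n}c_kD_{k,\psi_k,\phi}$ collapses to the single weighted composition–differentiation operator $D_{n,\psi,\phi}$, exactly as already noted for the earlier reductions in Sections 2 and 3. Every hypothesis transfers verbatim: $D_{n,\psi,\phi}$ is compact and $C_{\mu,\eta}$-symmetric, and $\phi(0)=0$. Moreover, by the $D_{n,\psi,\phi}$ specialization of Theorem \ref{thm-2.1}, complex symmetry together with $\phi(0)=0$ forces $b=\phi(0)=0$, so that $\psi(z)=a_nz^n$ and $\phi(z)=cz$, where $a_n=\psi^{(n)}(0)/n!\neq 0$ and $c=\phi'(0)$. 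This is precisely the normalized form underlying Theorem \ref{Th-4.1}, with the only surviving weight coefficient being $a_n$.

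First I would substitute $c_1=\cdots=c_{n-1}=0$, $c_n=1$ into the spectral formula of Theorem \ref{Th-4.1} and treat the two index families separately. For the finite family indexed by $m=1,2,\ldots,n-1$, each eigenvalue has the form $\sum_{k=1}^{m}a_kc_k\frac{m!}{(m-k)!}c^{m-k}$; since the summation range satisfies $k\le m\le n-1<n$, all coefficients $c_k$ occurring in it vanish, so every such eigenvalue equals $0$ and contributes nothing beyond the point $\{0\}$ already present. This is the step in which the three-part spectrum of $L_{n,\psi,\phi}$ degenerates into the two-part spectrum claimed for $D_{n,\psi,\phi}$.

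Next, for the infinite family indexed by $m=n,n+1,\ldots$, each eigenvalue is $\sum_{k=1}^{n}a_kc_k\frac{m!}{(m-k)!}c^{m-k}$, and here only the top term $k=n$ survives because $c_k=0$ for $k<n$ while $c_n=1$, leaving exactly $a_n\frac{m!}{(m-n)!}c^{m-n}$. Combining the two reductions yields
\[
\sigma(D_{n,\psi,\phi})=\{0\}\cup\Big\{a_n\tfrac{m!}{(m-n)!}c^{m-n}:m=n,n+1,\ldots\Big\},
\]
as required. The eigenvector assertion is inherited in the same way: Theorem \ref{Th-4.1} identifies $z^m$ (for $m\ge n$) as an eigenvector of $L_{n,\psi,\phi}$ with eigenvalue $\sum_{k=1}^{n}a_kc_k\frac{m!}{(m-k)!}c^{m-k}$, which under our choice of coefficients is precisely $a_n\frac{m!}{(m-n)!}c^{m-n}$; hence $f_m(z)=z^m$ is an eigenvector of $D_{n,\psi,\phi}$ with that eigenvalue.

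There is no genuine analytic obstacle here, since Theorem \ref{Th-4.1} already supplies the full spectral decomposition and the explicit eigenvectors. The only point requiring care is the index bookkeeping: one must verify that the vanishing of $c_1,\ldots,c_{n-1}$ forces the entire finite family of eigenvalues to collapse to $0$, and simultaneously reduces each inner sum in the infinite family to its single top term $k=n$. Once this indexing is checked, the corollary follows immediately from Theorem \ref{Th-4.1}.
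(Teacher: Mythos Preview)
Your proposal is correct and follows exactly the approach the paper intends: the corollary is stated as a direct consequence of Theorem \ref{Th-4.1}, and your specialization $c_1=\cdots=c_{n-1}=0$, $c_n=1$ together with the resulting collapse of the two eigenvalue families is precisely the intended reduction. The paper gives no separate proof of this corollary, so your index bookkeeping is in fact more explicit than what appears there.
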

\begin{proof}[\bf Proof of Theorem \ref{Th-4.1}]
	Since $ L_{n,\psi, \phi} $ is complex symmetric on $ \mathcal{A}^2_{\alpha}(\mathbb{D}) $ with conjugation $ C_{\mu, \eta} $ and $ \phi(0)=0 $, in view of Theorem \ref{thm-2.1}, we obtain
	\begin{align*}
		\psi_(z)=a_kz^k \; \mbox{for}\; k=1, 2, \ldots, n\; \mbox{and}\; \phi(z)=cz
	\end{align*}
	for some $ c\in\mathbb{C} $ and $ a_1, a_2, \ldots, a_n\in\mathbb{C}\setminus\{0\} $. A simple computation shows that
\begin{align*}
	L_{n, \psi, \phi}&=\sum_{k=1}^{n}c_kD_{k, \psi_k, \phi(z^m)}\\&=\sum_{k=1}^{n}c_k\psi_k(z)(z^m)^{(k)}(\phi(z))\\&=\sum_{k=1}^{n}c_ka_kz^k(z^m)^{(k)}(cz)\\&=
	\begin{cases}
		\displaystyle\sum_{k=1}^{m}c_ka_kz^kc^{m-k}m(m-1)\cdots(m-(k-1))z^{m-k}\; \mbox{if}\; m=1, 2, \ldots, n-1\vspace{2mm}\\ 
		\displaystyle	\sum_{k=1}^{n}c_ka_kz^kc^{m-k}m(m-1)\cdots(m-(k-1))z^{m-k}\; \mbox{if}\; m=n, n+1,n+2, \ldots
	\end{cases}
\\&=
\begin{cases}
	\displaystyle\sum_{k=1}^{m}c_ka_kc^{m-k}\frac{m!}{(m-k)!}z^{m}\; \mbox{if}\; m=1, 2, \ldots, n-1\vspace{2mm}\\ 
	\displaystyle	\sum_{k=1}^{n}c_ka_kc^{m-k}\frac{m!}{(m-k)!}z^{m}\; \mbox{if}\; m=n, n+1,n+2, \ldots 
\end{cases}
\end{align*}
for all $ m\in\mathbb{N} $. Thus we see that
\begin{align*}
	&\bigg\{\sum_{k=1}^{m} a_kc_k\frac{m!}{(m-k)!}c^{m-k}: m=1, 2, \ldots, n-1\bigg\}\\&\bigcup\bigg\{\sum_{k=1}^{n} a_kc_k\frac{m!}{(m-k)!}c^{m-k}: m=n, n+1,n+2, \ldots\bigg\}
\end{align*}
belongs to the spectrum of $ L_{n, \psi, \phi} $.\vspace{1.2mm} 

Let $ \lambda $ be an arbitrary eigenvalue of $ L_{n, \psi, \phi} $ with the corresponding eigenvector $ f $. Then a simple computations shows that 
\begin{align}\label{E-4.1}
	\lambda f(z)&=\sum_{k=1}^{n}c_kD_{k, \psi_k, \psi f}\\&=\nonumber\sum_{k=1}^{n}c_k\psi_k(z)f^{(k)}(\phi(z))\\&=\sum_{k=1}^{n}c_ka_kz^kf^{(k)}(cz).\nonumber
\end{align}
If $ f(0)\neq 0 $, then it is easy to see that $ \lambda=0 $. If $ f(0)=0 $, then differentiating both sides of \eqref{E-4.1}, we obtain
\begin{align}\label{E-4.2}
	\lambda f^{\prime}(z)=\sum_{k=1}^{n}c_ka_k\left(kz^{k-1}f^{(k)}(cz)+z^kcf^{(k+1)(cz)}\right).
\end{align}
If $ f^{\prime}(0)\neq 0 $, then we see that $ \lambda=a_1c_1$. If $ f^{\prime}(0)=0 $, then differentiating $ 2 $-times of \eqref{E-4.1} gives 
\begin{align*}
	\lambda f^{\prime\prime}(z)&=\sum_{k=1}^{n}a_kz^k\sum_{j=0}^{2}\binom{2}{j}(z^k)^{2-j}\left(f^{(k)}(cz)\right)^j\\&=\sum_{k=1}^{n}a_kc_k\left(\binom{2}{0}(z^k)^{\prime\prime}f^{(k)}(cz)+\binom{2}{1}(z^k)^{\prime}f^{(k+1)}(cz)+\binom{2}{2}(z^k)f^{(k+2)}(cz)\right).
\end{align*}
If $ f^{\prime\prime}(0)\neq 0 $, then it is easy to see that
\begin{align*}
	\lambda=\sum_{k=1}^{2}a_kc_k \binom{2}{k}k!c^{2-k}=\sum_{k=1}^{2}a_kc_k\frac{2!}{(2-k)!}c^{2-k}.
\end{align*}
If $ f^{\prime\prime}(0)=0 $, then differentiating \eqref{E-4.1} $ 3 $-times, we obtain 
\begin{align*}
	\lambda f^{\prime\prime\prime}(z)&=\sum_{k=1}^{n}a_kc_k\sum_{j=1}^{3}\binom{3}{j}(z^k)^{3-j}\left(f^{(k)}(cz)\right)^{(j)}\\&=\sum_{k=1}^{n}a_kc_k\bigg(\binom{3}{0}(z^k)^{\prime\prime\prime}f^{(k)}(cz)+\binom{3}{1}(z^k)^{\prime\prime}f^{(k+1)}(cz)+\binom{3}{2}(z^k)^{\prime}f^{(k+2)}(cz)\\&\quad+\binom{3}{3}(z^k)f^{(k+3)}(cz)\bigg).
\end{align*}
If $ f^{\prime\prime\prime}(0)\neq 0 $, then we see that
\begin{align*}
	\lambda=\sum_{k=1}^{3}a_kc_k\binom{3}{k}k!c^{3-k}=\sum_{k=1}^{3}a_kc_k\frac{3!}{(3-k)!}c^{3-k}.
\end{align*}
If $ f^{\prime\prime\prime}(0)=0 $, then differentiating both sides of \eqref{E-4.1} $ 4 $-times, we obtain

\begin{align*}
	\lambda f^{(iv)}(z)&=\sum_{k=1}^{n}a_kc_k\sum_{j=1}^{4}\binom{4}{j}(z^k)^{4-j}\left(f^{(k)}(cz)\right)^{(j)}\\&=\sum_{k=1}^{n}a_kc_k\bigg(\binom{4}{0}(z^k)^{(4)}f^{(k)}(cz)+\binom{4}{1}(z^k)^{\prime\prime\prime}f^{(k+1)}(cz)+\binom{4}{2}(z^k)^{\prime\prime}f^{(k+2)}(cz)\\&\quad+\binom{4}{3}(z^k)^{\prime}f^{(k+3)}(cz)+\binom{4}{4}(z^k)f^{(k+4)}(cz)\bigg).
\end{align*}
If $ f^{(iv)}(0)\neq 0 $, then we see that
\begin{align*}
	\lambda=\sum_{k=1}^{4}a_kc_k\binom{4}{k}k!c^{4-k}=\sum_{k=1}^{4}a_kc_k\frac{4!}{(4-k)!}c^{4-k}.
\end{align*}
Continuing the process, it can be easily shown that if $ f^{(n-2)}(0)=0 $, then differentiating both sides of \eqref{E-4.1} $ (n-1) $-times, we have
\begin{align*}
\lambda f^{(n-1)}(z)&=\sum_{k=1}^{n}a_kc_k\sum_{j=0}^{n-1}\binom{n-1}{j}(z^k)^{(n-1-j)}\left(f^{(k)}(cz)\right)^{(j)}\\&=\sum_{k=1}^{n}a_kc_k\bigg(\binom{n-1}{0}(z^k)^{(n-1)}f^{(k)}(cz)+\binom{n-1}{1}(z^k)^{(n-2)}f^{(k+1)}(cz)+ \cdots \\&\quad+\binom{n-1}{n-3}(z^k)^{\prime\prime}f^{(k+n-3)}(cz)+\binom{n-1}{n-2}(z^k)^{\prime}f^{(k+n-2)}(cz)\\&\quad+\binom{n-1}{n-1}(z^k)f^{(k+n-1)}(cz)\bigg).
\end{align*}
If $ f^{(n-1)}(0)\neq 0 $, then 
\begin{align*}
	\lambda=\sum_{k=1}^{n-1}a_kc_k\binom{n-1}{k}k!c^{n-1-k}=\sum_{k=1}^{n-1}a_kc_k\frac{(n-1)!}{(n-1-k)!}c^{n-1-k}.
\end{align*}
If $ f^{(n-1)}(0)=0 $, then differentiating both sides of\eqref{E-4.1} $ n $-times , we obtain
\begin{align*}
	\lambda f^{(n)}(z)&=\sum_{k=1}^{n}a_kc_k\sum_{j=0}^{n}\binom{n}{j}(z^k)^{(n-j)}\left(f^{(k)}(cz)\right)^{(j)}\\&=\sum_{k=1}^{n}a_kc_k\bigg(\binom{n}{0}(z^k)^{(n)}f^{(k)}(cz)+\binom{n}{1}(z^k)^{(n-1)}f^{(k+1)}(cz)+ \ldots+ \\&\quad+\binom{n}{n-2}(z^k)^{\prime\prime}f^{(k+n-2)}(cz)+\binom{n}{n-1}(z^k)^{\prime}f^{(k+n-1)}(cz)\\&\quad+\binom{n}{n}(z^k)f^{(k+n)}(cz)\bigg).
\end{align*}
If $ f^{(n)}(0)\neq 0 $, then
\begin{align*}
	\lambda=\sum_{k=1}^{n}a_kc_k\binom{n}{k}k!c^{n-k}=\sum_{k=1}^{n}a_kc_k\frac{(n)!}{(n-k)!}c^{n-k}.
\end{align*}
If $ f^{(n)}(0)=0 $, then differentiating both sides of\eqref{E-4.1} $ (n+1) $-times , we obtain
\begin{align*}
		\lambda f^{(n+1)}(z)&=\sum_{k=1}^{n+1}a_kc_k\sum_{j=0}^{n+1}\binom{n+1}{j}(z^k)^{(n+1-j)}\left(f^{(k)}(cz)\right)^{(j)}\\&=\sum_{k=1}^{n+1}a_kc_k\bigg(\binom{n+1}{0}(z^k)^{(n+1)}f^{(k)}(cz)+\binom{n+1}{1}(z^k)^{(n)}f^{(k+1)}(cz)\\&\quad+\binom{n+1}{2}(z^k)^{(n-1)}f^{(k+2)}(cz)+ \cdots+ \binom{n}{n-2}(z^k)^{\prime\prime}f^{(k+n-2)}(cz)\\&\quad+\binom{n}{n-1}(z^k)^{\prime}f^{(k+n-1)}(cz)+\binom{n}{n}(z^k)f^{(k+n)}(cz)\bigg).
\end{align*}
If $ f^{(n+1)}(0)\neq 0 $, then 
\begin{align*}
	\lambda=\sum_{k=1}^{n}a_kc_k\binom{n+1}{k}k!c^{n+1-k}=\sum_{k=1}^{n}a_kc_k\frac{(n+1)!}{(n+1-k)!}c^{n+1-k}.
\end{align*}
Similarly, it can be shown that if $ f^{(n+2)}(0)\neq 0 $, then 
\begin{align*}
	\lambda=\sum_{k=1}^{n}a_kc_k\binom{n+2}{k}k!c^{n+2-k}=\sum_{k=1}^{n}a_kc_k\frac{(n+2)!}{(n+2-k)!}c^{n+2-k}.
\end{align*}
Evidently, if $ f^{(m)}(0)\neq 0 $ where $ m\geq n $, then 
\begin{align*}
	\lambda=\sum_{k=1}^{n}a_kc_k\binom{m}{k}k!c^{m-k}=\sum_{k=1}^{n}a_kc_k\frac{m!}{(m-k)!}c^{m-k}.
\end{align*}
Hence, the spectrum of $ L_{n, v, \psi} $ is
\begin{align*}
	\sigma(L_{n, v, \psi})&=\{0\}\bigcup\bigg\{\sum_{k=1}^{m} a_kc_k\frac{m!}{(m-k)}c^{m-k}: m=1, 2, \ldots, n-1\bigg\}\\&\quad\bigcup\bigg\{\sum_{k=1}^{n}a_kc_k\frac{m!}{(m-k)}c^{m-k}: m=n, n+1, n+2, \ldots\bigg\}.
\end{align*} 
This completes the proof.
\end{proof}


\section{Declaration}
\noindent\textbf{Compliance of Ethical Standards}\\

\noindent\textbf{Conflict of interest} The authors declare that there is no conflict  of interest regarding the publication of this paper.\vspace{1.5mm}

\noindent\textbf{Data availability statement}  Data sharing not applicable to this article as no datasets were generated or analysed during the current study.

\end{document}